\documentclass[12pt]{amsart}
 \usepackage{adjustbox}   
\usepackage{amsmath, amssymb,mathtools,amsthm,amscd,
multirow, tikz-cd,blindtext, graphicx, float, tabularx}
\usepackage[all]{xy}
\usepackage[english]{babel}
\usepackage[colorinlistoftodos]{todonotes}
\definecolor{dodgerblue}{rgb}{0.12, 0.56, 1.0}
\usepackage[raiselinks=false,colorlinks=true,citecolor=blue,urlcolor=dodger blue,
linkcolor=blue,bookmarksopen=true,pdftex]{hyperref}

\tikzcdset{scale cd/.style={every label/.append style={scale=#1},
    cells={nodes={scale=#1}}}}
    \pagestyle{headings}
\textwidth = 6.5 in \textheight = 8.5 in \oddsidemargin = 0.0 in
\evensidemargin = 0.0 in \topmargin = 0.0 in \headheight = 0.0 in
\headsep = 0.5 in
\parskip = 0.1in
\parindent = 0.0in

\usepackage[mathscr]{eucal}
\newtheorem{theorem}{Theorem}[section]
\newtheorem{proposition}[theorem]{Proposition}
\newtheorem{lemma}[theorem]{Lemma}

\begin{document}

\newcommand{\ind}{{\rm ind}}
\newcommand{\lra}[1]{ \langle #1 \rangle}
\def\inn{\mathrm{Inn}}
\def\myhom{\mathrm{Hom}}
\def\gcd{\mathrm{gcd}}
\newcommand{\tmod}[1]{\; (#1)}
\newcommand{\myendo}[2]{{\rm End}(#1,#2)}
\newcommand\citeY[1]{\citeauthor{#1} (\citeyear{#1})}
\newcommand\citeN[1]{[\citenumber{#1}]}

\newcommand{\af}{\alpha}
\newcommand{\et}{\eta}
\newcommand{\ga}{\gamma}
\newcommand{\Ga}{\Gamma}
\newcommand{\ta}{\tau}
\newcommand{\ph}{\varphi}
\newcommand{\bt}{\beta}
\newcommand{\lb}{\lambda}
\newcommand{\Lb}{\Lambda}
\newcommand{\wh}{\widehat}
\newcommand{\sg}{\sigma}
\newcommand{\Sg}{\Sigma}
\newcommand{\om}{\omega}
\newcommand{\Om}{\Omega}

\newcommand{\bb}{\mathbb}
\newcommand{\bbN}{\mathbb N}
\newcommand{\bbZ}{\mathbb{Z}}
\newcommand{\bbR}{\mathbb R}
\newcommand{\bbC}{\mathbb C}

\newcommand{\cH}{\mathcal H}
\newcommand{\cF}{\mathcal F}
\newcommand{\cN}{\mathcal N}
\newcommand{\cR}{\mathcal R}
\newcommand{\cc}{\mathcal C}

\newcommand{\T}{\mathrm{T}}
\newcommand{\im}{\mathrm{Im}}
\newcommand{\SO}{\mathrm{SO}}
\renewcommand{\O}{\mathrm{O}}
\newcommand{\Spin}{\mathrm{Spin}}
\newcommand{\E}{\mathrm E}
\newcommand{\tor}{\mathrm{Tor}}
\newcommand{\rank}{\mathrm{rank}}
\newcommand{\Mod}[1]{\ (\mathrm{mod}\ #1)}
\newcommand{\GL}{\mathrm{GL}}
\newcommand{\SU}{\mathrm{SU}}
\newcommand{\dprime}{{\prime\prime}}
\newcommand{\p}{\prime}

\newcommand{\bea} {\begin{eqnarray*}}
\newcommand{\beq} {\begin{equation}}
\newcommand{\bey} {\begin{eqnarray}}
\newcommand{\eea} {\end{eqnarray*}}
\newcommand{\eeq} {\end{equation}}
\newcommand{\eey} {\end{eqnarray}}
\newcommand{\ovl}{\overline}
\newcommand{\I}{{\imath}}

\newcolumntype{P}[1]{>{\centering\arraybackslash}p{#1}}

\title{The K-ring of $\E_6/\Spin(10)$}
\author[S. Podder]{Sudeep Podder}
\address{Department of Mathematics, Indian Institute of Technology Madras, Chennai 600036, India}
\email{sudeep@smail.iitm.ac.in}
\author[P. Sankaran]{Parameswaran Sankaran}
\address{Chennai Mathematical Institute, SIPCOT IT Park, Siruseri, Kelambakkam, 603103, India}
\email{sankaran@cmi.ac.in}

\date{\today}
\keywords{Quotients of the exceptional Lie group $\E_6$, $K$-theory, Hodgkin spectral sequence.}
\subjclass[2010]{Primary: 55N15; Secondary: 19L99 }

\begin{abstract}
Let $\E_6$ denote the simply-connected compact exceptional Lie group of rank $6$.  The Lie group $\Spin(10)$ naturally embeds in $\E_6$, corresponding to the inclusion of the Dynkin diagrams.  We determine the $K$-ring of the coset space $\E_6/\Spin(10)$. We identify the class of the tangent bundle of $\E_6/\Spin(10)$ in $KO(\E_6/\Spin(10))$.  As an application we show that $\E_6/\Spin(10)$ 
can be immersed in the Euclidean space $\mathbb R^{53}.$  
\end{abstract}
\maketitle

\section{Introduction}
The purpose of this article is to determine the ring structure of the complex 
$K$-theory of the compact homogeneous space $M=\E_6/\Spin(10)$ where $\Spin(10)$ 
is embedded in the compact simply-connected Lie group $\E_6$ obtained from the embedding of the Dynkin diagram of $\Spin(10)$ 
into that of $\E_6$.  There are two embeddings of the Dynkin diagram of $\Spin(10)$  corresponding to omitting any of the two end roots of the `long arms' of the Dynkin diagram of $\E_6$. 
 They are mapped to one another by a diagram automorphism of $\E_6$, so the corresponding embeddings of $\Spin(10)$ are mapped to one another 
by an outer automorphism of $\E_6$.  So 
the corresponding homogeneous spaces are diffeomorphic.  The manifold $M$ is the principal $\mathbb S^1$ bundle over $X=\E_6/(\Spin(10)\cdot\mathbb S^1)$,  which is a Hermitian symmetric space.  It is known that 
$M\times \mathbb S^1$ admits the structure of the total space of a complex analytic bundle with fibre an elliptic curve.  See \cite{wang}.  
However, $M\times \mathbb S^1$ does not admit a Kähler structure 
since its first Betti number is $1$.  The $K$-ring of $X$ is known.  Indeed if $Y$ is any homogeneous space $G/H$ where $G$ is a compact simply connected Lie group and $H$, a closed connected 
Lie subgroup of $G$ having the same rank as $G$, then it 
is known that $K^0(Y)=RH\otimes_{RG} \mathbb Z$.   Here $RG$ is the complex representation ring of $G$, $RH$ is an $RG$-module via 
the restriction homomorphism $RG\to RH$,  and $\mathbb Z$ is the $RG$-module via the augmentation $RG\to \mathbb Z$ that maps $[V]$ to $\dim V$ for any complex representation $V$ of $G$.   See \cite{AH1} and \cite{P}.

We now state the main results of this note.
Denote by $\lambda_1$ the standard representation of $\SO(10)$ on $\mathbb C^{10}$, 
viewed as a representation of $\Spin(10)$ via the double covering $\Spin(10)
\to \SO(10)$.  We shall denote by the same symbol $\lambda_1$ the $\alpha$-construction on $\lambda_1$.  (Thus the total space $E(\lambda_1)$ equals 
$\E_6\times_{\Spin(10)}\mathbb C^{10}$ with projection $E(\lambda_1)\to \E_6/\Spin(10)$ defined as $[x,v]\mapsto x\Spin(10)~\forall x\in \E_6, v\in \mathbb C^{10}.$)
\begin{theorem}\label{main1}
$K^0(\E_6/\Spin(10))\cong \mathbb Z[u]/\langle u^3\rangle$ where 
$u=\lambda_1-10$. The $K^1(\E_6/\Spin(10))$ is a free $K^0(\E_6/\Spin(10))$-module of rank $1$. 
\end{theorem}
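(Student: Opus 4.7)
The plan is to exploit the principal $S^1$-bundle $\pi\colon M\to X$, where $X=\E_6/(\Spin(10)\cdot S^1)$ is the Hermitian symmetric space of type EIII, and to combine the Gysin sequence in complex $K$-theory with the equal-rank formula recalled in the introduction. Since $\Spin(10)\cdot S^1$ has the same rank as $\E_6$, that formula gives $K^0(X)\cong R(\Spin(10)\cdot S^1)\otimes_{R(\E_6)}\mathbb{Z}$ and $K^1(X)=0$. The first step is to make this presentation explicit: using the branching of the fundamental $\E_6$-representations to $\Spin(10)$ (for instance the $27$-dimensional representation decomposes as $1\oplus\lambda_1\oplus\Delta$ for a half-spin representation $\Delta$), together with the character $t$ generating the central $S^1$, I would realize $K^0(X)$ as an explicit quotient of $R(\Spin(10))[t,t^{-1}]$. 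The Euler characteristic $\chi(X)=|W(\E_6)|/|W(D_5)|=27$ serves as a rank check on the additive structure.

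The associated complex line bundle for the circle bundle is $L=\E_6\times_{\Spin(10)\cdot S^1}\mathbb{C}_\chi$, with $\chi$ trivial on $\Spin(10)$ and non-trivial on the central circle. The Gysin sequence, combined with $K^1(X)=0$, collapses to the identifications
\[K^0(M)\cong K^0(X)\big/(1-[L])K^0(X),\qquad K^1(M)\cong \mathrm{Ann}_{K^0(X)}(1-[L]).\]
Since $\lambda_1$ extends to $\Spin(10)\cdot S^1$ (possibly after twisting by a central character), the candidate generator $u=\lambda_1-10$ is the pullback under $\pi^*$ of a corresponding class in $K^0(X)$. The core computation is to verify that $K^0(X)/(1-[L])$ is additively spanned by $1,u,u^2$ with sole relation $u^3=0$: I would reduce the explicit presentation of $K^0(X)$ modulo $1-[L]$, using the restriction images of $\rho_2,\ldots,\rho_6\in R(\E_6)$ to express every surviving generator in terms of $u$, and then extract the cubic vanishing from the remaining augmentation relations.

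For $K^1$, the free $S^1$-action on $M$ forces $\chi(M)=0$ and hence $\mathrm{rank}\,K^1(M)=\mathrm{rank}\,K^0(M)=3$. A natural generator $\xi\in K^1(M)$ is the transgression of $[L]$ along $\pi$, namely the class restricting on each fiber to the canonical generator of $K^1(S^1)$; once $\partial\xi$ is identified with a $K^0(X)$-module generator of $\mathrm{Ann}(1-[L])$, a rank comparison forces $K^1(M)=K^0(M)\cdot\xi$ to be free of rank one. The main obstacle is the ring-theoretic control of $K^0(X)/(1-[L])$: one must cleanly track the image of each $\E_6$-relation modulo $1-[L]$ and verify that no relation beyond $u^3=0$ persists. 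The Hodgkin spectral sequence $\mathrm{Tor}^{*,*}_{R(\E_6)}(\mathbb{Z},R(\Spin(10)))\Rightarrow K^*(M)$ alluded to in the keywords provides an independent approach that would serve as a cross-check.
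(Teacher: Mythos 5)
Your route---the $K$-theory Gysin sequence for the principal circle bundle $\pi\colon M\to X$ over the Hermitian symmetric space $X=\E_6/(\Spin(10)\cdot\mathbb S^1)$, using $K^1(X)=0$ to identify $K^0(M)$ with $K^0(X)/(1-[L])$ and $K^1(M)$ with $\mathrm{Ann}_{K^0(X)}(1-[L])$---is genuinely different from the paper's, which runs the Hodgkin spectral sequence directly for $\Spin(10)\subset\E_6$ and computes $\tor^*_{R\E_6}(R\Spin(10),\mathbb Z)$ by a change-of-rings reduction to a two-variable Koszul complex over $\Omega=\mathbb Z[x,y]$. Your setup is sound: the equal-rank formula does give $K^0(X)=R(\Spin(10)\cdot\mathbb S^1)\otimes_{R\E_6}\mathbb Z$ free of rank $27$ with $K^1(X)=0$, the Gysin identifications are correct, and $u=\lambda_1-10$ is indeed pulled back from $X$ (via the $\Spin(10)\cdot\mathbb S^1$-representation $\lambda_1\xi^{-2}$). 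What the Gysin approach buys is geometric transparency; what it costs is that the entire content of the theorem is concentrated in the reduction of a rank-$27$ presentation modulo $1-[L]$.

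That reduction is precisely where your proposal stops. You announce that $K^0(X)/(1-[L])$ ``is additively spanned by $1,u,u^2$ with sole relation $u^3=0$'' and that you ``would'' extract this from the augmentation relations, but this step is the whole theorem: it requires the explicit branching data (in the paper, $V_{\varpi_1}\mapsto \lambda_1\xi^{-2}+\Delta_5^-\xi+\xi^4$, etc.) and a nontrivial algebraic identity equivalent to the paper's Lemma 5.3(e), namely that $\Lambda^2(\gamma')-\binom{78}{2}$ reduces to exactly $(\lambda_1-10)^3$ and that the remaining relations contribute nothing further. Without carrying this out, nothing rules out extra torsion or a different nilpotency order. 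Second, your $K^1$ argument has a genuine gap: $\chi(M)=0$ gives $\mathrm{rank}\,K^1(M)=\mathrm{rank}\,K^0(M)$, and exhibiting $\xi$ with $\delta\xi\in\mathrm{Ann}(1-[L])$ gives a cyclic submodule $K^0(M)\cdot\xi\subseteq K^1(M)$; but a rank comparison only shows this submodule has finite index, not that it is all of $K^1(M)$, and it does not show the module is \emph{free} of rank one, which requires the annihilator of $\xi$ in $\mathbb Z[u]/\langle u^3\rangle$ to vanish. You would need to compute $\ker(1-[L])$ explicitly inside your presentation of $K^0(X)$ and check it is generated by a single element with zero annihilator. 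In the paper both facts fall out automatically from the Koszul resolution, where $\tor_1^\Omega(B,\mathbb Z)\cong B/\langle(\lambda_1-10)^3\rangle$ is manifestly free of rank one over $\tor_0^\Omega(B,\mathbb Z)$.
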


We identify the 
class of the tangent bundle of $\E_6/\Spin(10)$ in its $KO$-ring and 
obtain the following.  

\begin{theorem}\label{main2}
The manifold $\E_6/\Spin (10)$ can be immersed in $\mathbb R^{53}$ but not in 
$\mathbb R^{40}.$
\end{theorem}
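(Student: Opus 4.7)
The plan is to use the identification of $[TM]$ in $KO(M)$ (made in the preceding part of the paper) in two directions: for the immersion we convert a stable identity in $KO(M)$ into a geometric one via the Hirsch--Smale theorem, and for the non-immersion we use a nonvanishing dual Stiefel--Whitney class to obstruct it. First, $\dim M = 78 - 45 = 33$, and since $M$ is the total space of a principal $\mathbb S^1$-bundle over the Hermitian symmetric space $X = \E_6/(\Spin(10)\cdot \mathbb S^1)$ of real dimension $32$, the isotropy representation of $\Spin(10)$ on $\mathfrak e_6/\mathfrak{spin}(10)$ splits as a trivial line (the $\mathbb S^1$-direction) plus the realification of a $16$-dimensional half-spin representation of $\Spin(10)$. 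Thus $TM \cong \epsilon^1 \oplus \xi$, where $\xi$ is the associated real rank-$32$ bundle.

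For the immersion in $\mathbb R^{53}$, by the Hirsch--Smale theorem it suffices to produce a real bundle $\eta$ of rank $20$ with $\eta \oplus TM \cong \epsilon^{53}$. A natural candidate is $\lambda_1$ regarded as a real bundle, which has real rank $20$. Using the previously computed class of $TM$ in $KO(M)$, the ring relation $u^3 = 0$ in $K^0(M) = \mathbb Z[u]/\langle u^3\rangle$, and the corresponding structure of $KO(M)$, one verifies the stable identity $[\lambda_1^{\mathbb R}] + [TM] = 53$ in $KO(M)$. The calculation is finite thanks to the nilpotence of $u$.

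For the non-immersion in $\mathbb R^{40}$, suppose for contradiction that $M$ immerses in $\mathbb R^{40}$; then the normal bundle $\nu$ has real rank $7$, and $w_i(\nu) = 0$ for $i > 7$. The Wu relation $w(TM)\,w(\nu) = 1$ then forces the dual Stiefel--Whitney classes $\bar w_j(TM)$ to vanish for $j > 7$. To contradict this, I would compute $w(TM)$ in $H^*(M;\mathbb Z/2)$ from $TM = \epsilon^1 \oplus \xi$, using the standard formulas $w_{2k}(\xi) \equiv c_k(\xi_{\mathbb C}) \pmod 2$ and $w_{2k+1}(\xi) = 0$ for the underlying real bundle of the complex half-spin bundle $\xi_{\mathbb C}$, then invert to obtain $\bar w(TM)$, and exhibit some $\bar w_j(TM) \neq 0$ with $j \geq 8$.

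The main obstacle is the non-immersion half: it demands an explicit enough model of $H^*(M;\mathbb Z/2)$ in low degrees and control of the Stiefel--Whitney classes of $\xi$ therein. One obtains this via the Serre spectral sequence of the circle bundle $M \to X$, coupled with the classical description of $H^*(X;\mathbb Z/2)$ of the Hermitian symmetric space. The immersion half, once $[TM]$ is in hand in $KO(M)$, reduces to algebraic bookkeeping in a truncated polynomial ring.
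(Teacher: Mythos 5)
Your immersion half is essentially the paper's own argument: the bundle you call $\lambda_1^{\mathbb R}$ is exactly $r(\lambda_1)=2\lambda_{1,\mathbb R}$, of real rank $20$, and the stable identity $[\tau]+[2\lambda_{1,\mathbb R}]=[53]$ is precisely Proposition \ref{tangent1} (it is established there directly in $RO(\Spin(10))$, so no bookkeeping in $\mathbb Z[u]/\langle u^3\rangle$ is needed); since the total rank $53$ exceeds $\dim M=33$, the identity destabilizes to a genuine isomorphism $\tau\oplus\nu\cong 53\epsilon$ and Hirsch applies. That part is fine.

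The non-immersion half has a genuine gap: you never produce the promised nonzero dual class, and there is a structural reason your plan cannot produce one where you need it. The Gysin sequence of the circle bundle $M\to X$ over the Hermitian symmetric space $X$ (whose integral cohomology is torsion-free, concentrated in even degrees, with $b_2=b_4=b_6=1$) gives $H^i(M;\mathbb Z/2)=0$ for $1\le i\le 7$. By Whitney duality and Proposition \ref{tangent1}, the total dual class is $\bar w(\tau)=w(2\lambda_{1,\mathbb R})=w(\lambda_{1,\mathbb R})^2=\sum_i w_i(\lambda_{1,\mathbb R})^2$, a sum of squares lying in degrees $2i$; since $w_i(\lambda_{1,\mathbb R})\in H^i(M;\mathbb Z/2)=0$ for $1\le i\le 7$, every $\bar w_j(\tau)$ with $1\le j\le 15$ vanishes. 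So the first conceivable nonzero dual Stiefel--Whitney class is $\bar w_{16}(\tau)=w_8(\lambda_{1,\mathbb R})^2$; if that were nonzero you would be proving non-immersion in $\mathbb R^{48}$, far beyond what is claimed or known, so your plan either overshoots wildly or, more plausibly, detects nothing. The paper does not use mod $2$ classes at all: it quotes Singhof--Wemmer for the nonvanishing of the rational Pontryagin class $p_2(M)\in H^8(M;\mathbb Q)$ and Singhof for $p_1(M)=0$, deduces $p_2(\nu)=-p_2(\tau)\ne 0$ for the stable normal bundle, and extracts the lower bound from that (with the detailed argument deferred to Singhof--Wemmer, \S 3). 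To repair your proof you would have to replace the Stiefel--Whitney computation by this rational Pontryagin class argument.
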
 
The non-immersion result is essentially due to Singhof and Wemmer \cite[\S 3]{SW}.

Our proof of Theorem \ref{main1} involves use of the Hodgkin spectral sequence \cite{HO}, which 
will be recalled briefly in \S\ref{hss}.  
As a crucial computational aid, we will 
also need the `change of ring theorem', namely \cite[Theorem 6.1, Chapter 16]{CE}. 
  The $E_2$-page of the Hodgkin spectral sequence involves 
the representation ring of $\E_6$ and the structure of 
$R\Spin(10)$ as a module over $R\E_6$ via the restriction homomorphism $R\E_6\to R\Spin(10)$.   They will be described in \S \ref{reste6Tospin}.   
 For the sake of completeness, 
we describe $\E_6$ as a subgroup of the compact Lie group $\E_8$.  This will also help us 
describe the embedding of $\Spin(10)$ in $\E_6$.  The main theorems are proved in \S\ref{changeofrings} and 
\S\ref{immersion}.  

We have chosen to study the $K$-theory of $\E_6/\Spin(10)$ since it is the smallest dimensional quotient $\E_6/H$ where $H$ is connected and of rank less than $6$. Although the tools employed here would be applicable in full generality, the computation of the $E_2$-page $\tor^*_{R\E_6}(RH, \mathbb Z)$ appears to be rather unwieldy.  

We assume familiarity with the basic notions of finite dimensional representations of compact Lie groups as can be found in \cite{A6}, \cite{FH}, the Clifford algebra and the spin representations (as in \cite{Hu}), as well as 
the description of the exceptional Lie group $\E_6$. A good reference for the construction of the exceptional simple Lie groups is the book 
\cite{AdamsMahmudMimura}.  

\section{$\E_6$ as a subgroup of $\E_8$}
We begin by describing the compact simply connected exceptional Lie group $\E_6$ as a subgroup of the compact exceptional Lie group $\E_8$.  We follow \cite{AdamsMahmudMimura} closely.
This will allow us to describe 
the `standard' maximal torus of $\E_6$ as a subgroup of the `standard' maximal torus of $\E_8$.  This will be needed to carry out our description of the restriction homomorphism $R\E_6\to R\Spin(10)$.    

The subgroup $\E_6$ of $\E_8$ is the centralizer of a copy of $\SU(3)$ 
contained in $\E_8$.  In order to describe this subgroup (to be identified with $\SU(3)$) and 
also to describe the fundamental representations of 
$\E_6$, we need the description of the Lie algebra $Lie(\E_8)$.   To fix notations, 
we need to label of nodes of the Dynkin diagram of $\E_8$.
We follow the labelling convention as in \cite{AdamsMahmudMimura}, given below.  

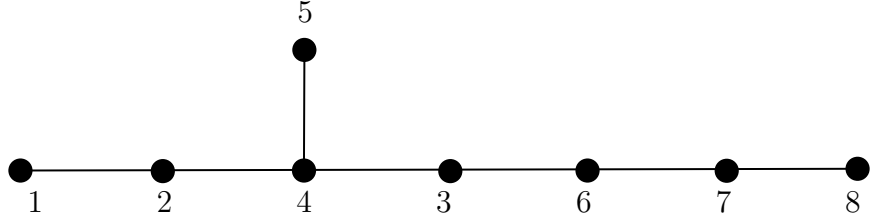
\begin{figure}[H]
\centering
\tikzset{every picture/.style={line width=0.75pt}}        

\begin{tikzpicture}[x=0.75pt,y=0.75pt,yscale=-1,xscale=1]

\draw    (184.2,121.31) -- (612,120.42) ;
\draw  [fill={rgb, 255:red, 0; green, 0; blue, 0  }  ,fill opacity=1 ] (184.2,121.31) .. controls (184.2,118.22) and (186.7,115.71) .. (189.8,115.71) .. controls (192.89,115.71) and (195.4,118.22) .. (195.4,121.31) .. controls (195.4,124.4) and (192.89,126.91) .. (189.8,126.91) .. controls (186.7,126.91) and (184.2,124.4) .. (184.2,121.31) -- cycle ;
\draw  [fill={rgb, 255:red, 0; green, 0; blue, 0  }  ,fill opacity=1 ] (470.2,121.31) .. controls (470.2,118.22) and (472.7,115.71) .. (475.8,115.71) .. controls (478.89,115.71) and (481.4,118.22) .. (481.4,121.31) .. controls (481.4,124.4) and (478.89,126.91) .. (475.8,126.91) .. controls (472.7,126.91) and (470.2,124.4) .. (470.2,121.31) -- cycle ;
\draw  [fill={rgb, 255:red, 0; green, 0; blue, 0  }  ,fill opacity=1 ] (327.2,121.31) .. controls (327.2,118.22) and (329.7,115.71) .. (332.8,115.71) .. controls (335.89,115.71) and (338.4,118.22) .. (338.4,121.31) .. controls (338.4,124.4) and (335.89,126.91) .. (332.8,126.91) .. controls (329.7,126.91) and (327.2,124.4) .. (327.2,121.31) -- cycle ;
\draw  [fill={rgb, 255:red, 0; green, 0; blue, 0  }  ,fill opacity=1 ] (256,121.6) .. controls (256,118.51) and (258.51,116) .. (261.6,116) .. controls (264.69,116) and (267.2,118.51) .. (267.2,121.6) .. controls (267.2,124.69) and (264.69,127.2) .. (261.6,127.2) .. controls (258.51,127.2) and (256,124.69) .. (256,121.6) -- cycle ;
\draw  [fill={rgb, 255:red, 0; green, 0; blue, 0  }  ,fill opacity=1 ] (401,121.6) .. controls (401,118.51) and (403.51,116) .. (406.6,116) .. controls (409.69,116) and (412.2,118.51) .. (412.2,121.6) .. controls (412.2,124.69) and (409.69,127.2) .. (406.6,127.2) .. controls (403.51,127.2) and (401,124.69) .. (401,121.6) -- cycle ;
\draw    (333,66.02) -- (332.8,115.71) ;
\draw  [fill={rgb, 255:red, 0; green, 0; blue, 0  }  ,fill opacity=1 ] (327.4,60.42) .. controls (327.4,57.33) and (329.9,54.83) .. (333,54.83) .. controls (336.09,54.83) and (338.59,57.33) .. (338.59,60.42) .. controls (338.59,63.52) and (336.09,66.02) .. (333,66.02) .. controls (329.9,66.02) and (327.4,63.52) .. (327.4,60.42) -- cycle ;
\draw  [fill={rgb, 255:red, 0; green, 0; blue, 0 }  ,fill opacity=1 ] (540.4,121.42) .. controls (540.4,118.33) and (542.9,115.83) .. (546,115.83) .. controls (549.09,115.83) and (551.59,118.33) .. (551.59,121.42) .. controls (551.59,124.52) and (549.09,127.02) .. (546,127.02) .. controls (542.9,127.02) and (540.4,124.52) .. (540.4,121.42) -- cycle ;
\draw  [fill={rgb, 255:red, 0; green, 0; blue, 0  }  ,fill opacity=1 ] (606.4,120.42) .. controls (606.4,117.33) and (608.9,114.83) .. (612,114.83) .. controls (615.09,114.83) and (617.59,117.33) .. (617.59,120.42) .. controls (617.59,123.52) and (615.09,126.02) .. (612,126.02) .. controls (608.9,126.02) and (606.4,123.52) .. (606.4,120.42) -- cycle ;

\draw (191.8,130.31) node [anchor=north west][inner sep=0.75pt]    {$1\ \ \ \ \ \ \ \ \ \ \ 2\ \ \ \ \ \ \ \ \ \ \ \ 4\ \ \ \ \ \ \ \ \ \ \ \ 3\ \ \ \ \ \ \ \ \ \ \ \ 6\ \ \ \ \ \ \ \ \ \ \ \ 7\ \ \ \ \ \ \ \ \ \ \ 8$};
\draw (328,34.4) node [anchor=north west][inner sep=0.75pt]    {$5$};

\end{tikzpicture}
\caption{Dynkin diagram of $\E_8.$} \label{fig:M1}
\end{figure}

Consider the half-spin representations 
$\Delta^{\pm}_8$ of $\Spin(16)$.  They are complexifications of the real representations $\Delta^{\pm}_{8, \bb R}$. 
It turns out that $Lie(\Spin(16))\oplus \Delta^+_{8, \bb R}$ has the structure of a compact Lie algebra, extending the Lie algebra structure on $Lie(\Spin(16))$ and the $Lie(\Spin(16))$-module structure of $\Delta_{8,\mathbb R}^+.$  This is the Lie algebra of $\E_8$ and the group $\E_8$ is the group of automorphisms of $Lie(\Spin(16))\oplus\Delta^+_{8, \bb R}$, which is viewed as the adjoint action.  See \cite[Chapter 7]{AdamsMahmudMimura}.  Evidently, we have a homomorphism 
$\Spin(16)\to \E_8$ since $\Spin(16)$ is simply connected.
Its image is the semi-spin group $\Spin(16)/\mathbb Z_2\subset \E_8$, where $\mathbb Z_2$ is generated by the central element 
$\zeta_8:=e_1\cdots e_{16}\in \Spin(16)$. Here the $e_j$ denotes the standard basis vectors of $\mathbb R^{16}$ and are viewed as the Clifford algebra units. The reader is referred to Husemoller's book \cite[Chapter 11]{Hu} for basic information on Clifford algebras, spin groups and the (half-)
spin representations.

Consider the canonical monomorphism $\SU (3)\to \SO (6)$.  
Since $\SU(3)$ is simply connected, this monomorphism lifts to a map $\psi:\SU (3)\to \Spin (6)$. This, followed by the obvious homomorphism
$\Spin(6)\to \Spin(16) \to \E_8$,  yields a homomorphism 
$\SU(3)\to \E_8$.  It is, in fact, a monomorphism and corresponds to the inclusion of 
 the Dynkin diagram of $\SU(3)$, regarded as the edge joining the last two nodes of the long arm of the 
Dynkin diagram of $\E_8$.
These are labelled $7$ and $8$ in Figure \ref{fig:M1}.  
       
\begin{figure}[H]
    \[\begin{tikzcd}[scale cd=0.8]
            &&\widetilde{\rm T}_3\arrow[hookrightarrow]{d}\arrow[hookrightarrow]{r}& \widetilde{\rm T}_5\times \widetilde{\rm T}_3\arrow[hookrightarrow]{d}\arrow[hookrightarrow]{r} &  \widetilde{\rm T}_8\arrow[hookrightarrow]{d}\arrow[r, ""]& \rm T^8\arrow[hookrightarrow]{d}\\
         &\SU(3)\arrow[hookrightarrow]{d}\arrow[r, "\psi"]&\Spin(6)\arrow[d, "\pi"]\arrow[hookrightarrow]{r}& \Spin(10)\times \Spin(6)\arrow[r, ""] &  \Spin(16)\arrow[r, ""]& \E_8\\
         & \mathrm U (3) \arrow[r, ""] &\SO(6) 
    \end{tikzcd}\]
\caption{} \label{fig:M3}
\end{figure}

 The exceptional group $\E_6$ is then the centralizer of the image of $\SU(3)$ in $\E_8$. It is a simply connected group, and its centre is a cyclic group of order $3$.  The Dynkin diagram of $\E_6$ is obtained by omitting the nodes $7$ and $8$.   We shall describe the `standard' maximal torus $\rm T^6$ of $\E_6$ and describe $R\rm T^6$ and $R\E_6$ using the description of 
 the `standard' maximal torus $\rm T^8$ of $\E_8$.  This will enable us to 
 determine the restriction homomorphism $R\E_6\to RH$ when 
 $H=\Spin(10).$

\subsection{The maximal tori of $\E_6$ and $\Spin(10)$} \label{maximaltoruse6} 
Let $\mathbb T_k=(\mathbb S^1)^k=\mathbb R^k/\mathbb Z^k$, the 
{\it standard} $k$-dimensional torus.  Let $\rm T_k$ be the subgroup $\SO(2)\times \cdots\times \SO(2)\subset \SO(2k)$. Thus $\rm T_k$ is generated by proper rotations  
on the plane $\mathbb Re_{2j-1}+\mathbb Re_{2j}\subset \mathbb R^{2k}, 1\le j\le k.$  Then $\widetilde{\rm T}_k=\pi^{-1}(\rm T_k)$ is a maximal torus of 
$\Spin(2k)$ where $\pi:\Spin(2k)\to \SO(2k)$ is the double covering projection.
One has the homomorphism $\bar\omega_k:\mathbb T_k\to \rm T_k$ defined as $(\theta_1, \ldots,\theta_k)\mapsto D(2\theta_1,\ldots, 2\theta_k):=\prod_{1\le j\le k}
D_j(2\theta_j)$ where $D_j(t)(e_{2j-1})= \cos(2\pi t)e_{2j-1} -\sin(2\pi t) e_{2j}, D_j(t)(e_{2j})=\sin(2\pi t)e_{2j-1} +\cos(2\pi t)e_{2j}$ and $D_j(t)(e_k)=e_k $ if $k\ne 2j-1,2j$.  

Then $\bar \omega_k $ lifts to a unique homomorphism $\omega_k:
\mathbb T_k\to \widetilde{\rm T}_k$ so that $\bar \omega_k=\pi\circ \omega_k$.  It is readily seen that $\ker(\bar \omega_k)=\{1,-1\}^k\subset (\mathbb S^1)^k$ and that 
$\ker(\omega_k)=\{(\varepsilon_1,\ldots,\varepsilon_k)\in \ker (\bar \omega)\mid \prod \varepsilon_j=1\}$.   The complex representation rings of $\mathbb T_k,\widetilde{\rm T}_k$, and $\rm T_k$ are related as follows: Let $u_j$ be the one dimension (complex) representation of $\mathbb T_k$ corresponding to the $j$th projection $u_j: \mathbb T_k\to \mathbb S^1=\rm U(1)$ viewed as a character.  Then 
$R\mathbb T_k=\mathbb Z[u_1^{\pm 1},\ldots, u_k^{\pm 1}]$, $R\rm T_k=\mathbb Z
[u_1^{\pm 2}, \ldots, u_k^{\pm 2}]\subset R\mathbb T_k$ and 
$R\widetilde{\rm T}_k=\mathbb Z[u_1^{\pm 2},\ldots, u_k^{\pm 2}, u_1\ldots u_k]
\subset R\mathbb T_k$.  The double covering $\widetilde{\rm T}_k\to \rm T_k$ induces 
the inclusion $R{\rm T}_k\subset R\widetilde{\rm T}_k$.  
We shall abbreviate $\omega_k$ and $\bar\omega_k$ to $\omega$ 
and $\bar \omega$ when $k$ is clear from the context.
Our main 
interest is when $k=5, 8$.

We have the diagonal embedding $j:\mathbb S^1\hookrightarrow \mathbb T_3$. Define $\delta: \mathbb S^1\to \Spin(6)$ to be 
$z\mapsto \omega_3(j(z))$.    It is readily seen that $\delta$ is a monomorphism.  
Then $S:=\delta(\mathbb S^1)$ is contained in $\widetilde{\rm T}_3$ and 
centralizes the image of $\SU(3)$ under $\psi$.  So 
we obtain a 
homomorphism $S\times \SU(3)\to \Spin(6)$ defined as $(z,A)\mapsto z\cdot \psi(A)$  

We have the natural homomorphism 
$\Spin(10)\times \Spin(6)\to \Spin(16)$ and 
composing with the homomorphism $\Spin(16) \to \E_8$ we obtain a homomorphism $\Psi$ defined as the composition
\begin{equation}
\Psi: \Spin(10)\times \mathbb S^1\times \SU(3) 
\stackrel{h}{\to} \Spin(10)\times \Spin(6)\to \Spin(16) \to \E_8
\end{equation}
where $h$ is defined as $(v,z,A)\mapsto (v ,\delta(z)\cdot\psi(A))$.  The 
homomorphism maps $\SU(3)$ monomorphically into $\E_8$ and the image  
$\Psi(\SU(3))$ corresponds to the Lie subalgebra of $\E_8$ whose Dynkin 
diagram is the edge of Dynkin diagram of $\E_8$ joining the nodes $7,8.$ 
The subgroup $\E_6\subset \E_8$ is, by definition, the centralizer of this copy of $\SU(3)$ in $\E_8$.  
Clearly, $\Psi(\Spin(10)\times \mathbb S^1)$ is contained in $\E_6$. 
It can be seen that $\Psi|_{\Spin(10)\times \mathbb S^1}$ is a local isomorphism whose kernel is the cyclic subgroup $Z$ of order $4$, generated by $(e_1\cdots e_{10},i)\in \Spin(10)\times \mathbb S^1$. 
Therefore $\Psi(\Spin(10)\times \mathbb S^1)=\Spin(10)\times_{Z}\mathbb S^1$.  The group 
$\Spin(10)$ is mapped injectively into $\E_6$ by $\Psi.$
The subgroup $\rm T^6:=\Psi(\tilde T_5\times \mathbb S^1)\subset \E_6$ 
is a maximal torus of $\E_6$ which contains the maximal torus $\widetilde{\rm T}_5$ of $\Spin(10).$  These are the {\em standard} maximal tori of $\E_6$ and $\Spin(10).$

\section{Representation Ring of $\E_6$.}\label{Simple_roots}
Recall that $\mathbb T_8=(\mathbb S^1)^8$ is the standard torus of dimension $8$, whose representation ring is the Laurent polynomial ring $R\mathbb T_8=\mathbb Z[u_1^{\pm 1},\ldots, u_8^{\pm 1}]$.  Here, $u_j$ is the class of the $1$-dimensional representation with character the $j$th coordinate projection $u_j:\mathbb T_8\to \mathbb S^1,~1\le j\le 8.$  
Recall the covering projection $\mathbb T_8\to \widetilde{\rm T}_8$, where $\widetilde{\rm T}_8$  is the `standard' maximal torus of $\Spin(16).$
The representation ring of $\widetilde{\rm T}_8$ is $\bb Z[u_1^{\pm 2}, \ldots, u_8^{\pm 2}, u_1\cdots u_8]\subset R\mathbb{ \bb T}_8$. 

The standard maximal torus $\rm T^8\subset \E_8$ is the image of $\widetilde{\rm T}_8\subset \Spin (16)$ under the 
homomorphism $\Spin(16)\to \E_8$.  Therefore $\rm T^8=\widetilde{\rm T}_8/\langle \zeta\rangle $ where 
$\zeta\in \Spin(16)$ is the central element 
$e_1\cdots e_{16}\in \Spin (16)$.   Thus $R{\rm T}^8\subset R\widetilde{\rm T}_8$ is generated by those 
characters of $\widetilde{\rm T}_8$ which map $\zeta$ to $1$.  Since $u_j^{2}(\zeta)=-1\in \mathbb S^1$ for each $j$, 
we see that $u_i^2u_j^{\pm 2}, u_i^{-2}u_j^{\pm2}\in R\rm T^8, 1\le i<j\le 8$.  Also  $u_1^{\varepsilon_1}\cdots u_8^{\varepsilon_8}$ are in $R\rm T^8$ when $\sum_{1\le j\le 8} \varepsilon_j$ is even, with $\varepsilon_j\in \{1,-1\}$.  In fact, these 240 characters generate the ring $R\rm T^8$. To see this, 
Note that these are precisely the non-zero weights of the (complexified) adjoint representation $\mathfrak e_8:=Lie(\E_8)\otimes \mathbb C$ of $\E_8$ since 
$\mathfrak e_8=(Lie(\Spin(16))\otimes\bb C)\oplus \Delta_8^+$
as a representation of $\Spin(16)$.  
Since $\E_8$ is simply connected and has trivial centre, we have $P_8=Q_8$ where $P_8, Q_8$ are 
the weight lattice and the root lattice of $\E_8$, respectively. 
As for any connected compact semisimple Lie group $G$ 
 we have $\mathbb ZQ_8\subset R\rm T^8\subset \mathbb ZP_8$ and so we must have 
equality throughout. (Here, for group $A$, $\mathbb ZA$ denotes the group ring of $A$.)
This shows that $R\rm T^8=\mathbb ZQ_8$ as asserted. 

Next, we describe $R\rm T^6$ as a quotient of $R\rm T^8$.
We have the commutative diagram where the copy of $\mathbb S^1$ 
in ${\rm T}^8\subset \E_8$ is described in \ref{maximaltoruse6}. 
\begin{figure}
            \[\begin{tikzcd}
         \widetilde{\rm T}_8\arrow[rr, ""] &&  \rm T^8 \arrow[hookrightarrow,rr ,""]&& \E_8 \\
         \widetilde{\rm T}_5\times \bb S^1\arrow[u, ""] \arrow[rr, ""] && \rm T^6\arrow[u,""]\arrow[hookrightarrow,rr ,""]&&  \E_6\arrow[u, " "]
    \end{tikzcd}\]

    \centering
    \label{fig:enter-label}
\caption{Maximal tori of $\E_6$ and $ \E_8$.} \label{fig:maxtori}
\end{figure}

    Let $\xi$ be the standard (one dimensional) representation of $\bb S^1$, corresponding to the identity map of $\mathbb S^1$. Thus $R\mathbb S^1=\mathbb Z[\xi,\xi^{-1}].$

    The discussion from \S\ref{maximaltoruse6} shows that the restriction map $R\widetilde{ \rm T}_8=\bb Z[u_1^{\pm 2}, \cdots, u_8^{\pm 2}, u_1\cdots u_8]
    \to \mathbb Z[\xi,\xi^{-1}]=R\mathbb S^1$ is given by $u_j
    \mapsto 1$ for $1\le j\le 5$ and $u_j\mapsto \xi, 6\le j\le 8$. So 
    the restriction map $R\rm T^8\to R \mathbb S^1$ is essentially given by the 
    same formula. Now the homomorphism 
    $R\rm T^8\to R(\widetilde{\rm T}_5\times \mathbb S^1)
    =R\widetilde{\rm T}_5\otimes R\mathbb S^1
    =\mathbb Z[u_1^{\pm 2},\ldots ,u_5^{\pm 2}, u_1\cdots u_5,\xi,\xi^{-1}]$  is defined by 
  \begin{equation}  \label{reste8toe6}
    u_i\mapsto u_i, 1\le i\le 5, \textrm{~and~} u_j\mapsto \xi, 6\le j\le 8.
  \end{equation}  
    In particular, $\prod_{1\le j\le 8} u_j$ restricts to $u_1\cdots u_5 \xi^3.$
  Since $\widetilde{\rm T}_5\times \mathbb S^1\to \rm T^6$ is a covering projection, $R{\rm T}^6\to R(\widetilde{\rm T}_5\times \mathbb S^1)$ is a monomorphism, which is regarded as an inclusion map. The ring $R{\rm T}^6\subset R(\widetilde {\rm T}_5\times \mathbb S^1)$ is generated by the characters 
  which are trivial on $(e_1\cdots e_{10}, i)\in \widetilde{\rm T}_5\times \mathbb S^1.$
 Since $\widetilde{\rm T}_5\times \mathbb S^1 \to \rm T^8$ factors through $\rm T^6\hookrightarrow \rm T^8$, it follows that the restriction homomorphism $R{\rm T}^8\to R\rm T^6$ is 
 given by the same formula (\ref{reste8toe6}) as above. In particular,
 $\Delta^+_8\in R\rm T^8$ restricts to 
 $\Delta_5^+\xi^{3}+\Delta^-_5\xi^{-3}+3\Delta_5^+\xi^{-1}+3\Delta_5^-\xi\in R\rm T^6$.

\subsection{Roots, weights, and fundamental representations of $\E_6$}

Since $\E_6$ is simply connected, the representation ring $R\E_6$ 
is a polynomial algebra in the classes of the fundamental 
representations of $\E_6$. 
We shall use 
a different set of polynomial generators of $R\E_6$ which are more 
convenient for describing the restriction homomorphism
$R\E_6\to R\Spin(10)$. See Proposition \ref{RepE6}.

We begin by describing the roots and the fundamental weights 
of $\E_6$. For a more detailed discussion, we refer the reader to  
\cite{AdamsMahmudMimura}.

Denote 
by $x_j/2:Lie(\bb T_8)\to \mathbb R$ the linear map 
such that $u_j(t)=\exp(\pi i x_j(t))\in \mathbb S^1$ for all $t\in Lie(\mathbb T_8)=\mathbb R^8$.  
Then, the roots of $E_8$ are 
obtained as $x_i\pm x_j, -(x_i\pm x_j), 1\le i<j\le 8$, $(\sum_{1\le j\le 8}\varepsilon_j x_j)/2$, where $\varepsilon_j\in \{1,-1\}$ with $\sum \varepsilon_j$ even.   

The root lattice of $\SU(3)\subset \E_8$ is the subgroup generated by $x_6-x_7, x_7-x_8
\in Q_8$. Since $\E_6$ is the centralizer of $\SU(3)$, 
the root lattice $Q_6$ of $\E_6$ is 
obtained as the subgroup of vectors in $Q_8\subset \mathbb R^8$ 
that are orthogonal, with respect to the standard inner product on $\mathbb R^8$, to $\{x_6-x_7, x_7-x_8\}$. Thus the set $\Phi_6$ of roots of $\E_6$
consists of $\pm (x_i\pm x_j), 1\le i<j\le 5, (\sum_{1\le j\le 8}\varepsilon_j x_j)/2$ where $\sum \varepsilon_j$ is even, $\varepsilon_j\in \{1,-1\}$ with $ \varepsilon_6=\varepsilon_7=\varepsilon_8.$  
We set $\alpha_1=\frac{1}{2}(x_1+x_2-x_3-x_4-x_5-x_6-x_7-x_8), \alpha_2=-x_2+x_3, \alpha_3=x_2+x_3, \alpha_4=-x_3+x_4, \alpha_5=-x_4+x_5, \alpha_6=\frac{1}{2}(x_1-x_2-x_3-x_4-x_5+x_6+x_7+x_8)$.  Then $\{\alpha_i\mid  1\le i\le 6\}$ is a set of simple roots where the labelling is consistent with the labelling of the nodes of the Dynkin diagram of $\E_8$. 
We shall denote the set of positive roots of $\E_6$ by $\Phi_6^+.$
The corresponding fundamental weights of $\E_6$ are 
 \begin{align*}
                \varpi_1 & =x_1-\frac{1}{3}(x_6+x_7+x_8)\\
                \varpi_2 & =\frac{3}{2}x_1+\frac{1}{2}(-x_2+x_3+x_4+x_5)-\frac{1}{6}(x_6+x_7+x_8)\\
                \varpi_3 & =\frac{3}{2}x_1+\frac{1}{2}(x_2+x_3+x_4+x_5)+\frac{1}{6}(x_6+x_7+x_8)\\
                \varpi_4 & =2x_1+x_4+x_5\\
                \varpi_5& =x_1+x_5\\
                \varpi_6 & =x_1+\frac{1}{3}(x_6+x_7+x_8).
\end{align*}

Therefore the weight lattice $P_6$ of 
$\E_6$ equals  
$\{\sum_{1\le j\le 6} a_j\varpi_j\mid a_j\in \mathbb Z\} $.  Observe that, since $\varpi_5=x_1+x_5$ is a root, $V_{\varpi_5}$ is the 
(complexified) 
adjoint representation $\mathfrak e_6.$  The dimensions of the fundamental representations of $\E_6$ 
are well-known to experts and are given in Table \ref{tab1}.  (Cf. \url{http://oeis.org/A121737}.) 
In any case, 
the dimension of an irreducible representation $V_\lambda$ with highest weight $\lambda$ 
can be obtained using the Weyl dimension formula.  See \cite[Theorem 10.18]{Hall}.

The involution of the Dynkin diagram of 
$\E_6$ fixes the nodes $4,5$ and swaps $1$ with $6$ and $2$ with $3$. The outer automorphism 
of $\E_6$, that induces the diagram automorphism, induces an involution of $R\E_6$ 
that fixes $[V_{\varpi_4}], [V_{\varpi_5}]$ and swaps $[V_{\varpi_1}]$ with $[V_{\varpi_6}]$
and $[V_{\varpi_2}]$ with $[V_{\varpi_3}]$.

\begin{table}[H]
  \centering
  \begin{tabular}{|P{7cm}|P{3.325cm}|P{3.325cm}|}
    \hline
    \textbf{Highest weight} & \textbf{Representation}                                         &   \textbf{Dimension}   \\ \hline
    $\varpi_1, \varpi_6$               & $V_{\varpi_1}, V_{\varpi_6}$              & 27                      \\ \hline
    $\varpi_2, \varpi_3$      & $V_{\varpi_2}, V_{\varpi_3}$             & 351         \\ \hline
    $\varpi_5$                                 & $V_{\varpi_5}$              & 78         \\ \hline
    $\varpi_4$                                              & $V_{\varpi_4}$              & 2925        \\ \hline
  \end{tabular}
  \newline\newline
  \caption{Dimensions of fundamental representations of $\E_6$.}\label{tab1}
\end{table}

As mentioned already, since $\E_6$ is simply connected, we have 
$R\E_6=\mathbb Z[[V_{\varpi_1}], [V_{\varpi_2}],\dots, [V_{\varpi_6}]]$, where $V_{\varpi_j}$ is 
complex representation of $\E_6$ with highest weight $\varpi_j, 1\leq j\leq 6$.
Proposition \ref{RepE6} below says that the generators $[V_{\varpi_4}]$ of $R\E_6$ can be replaced by $\Lambda^2(V_{\varpi_5})$.  
This is a particular case of a more general result of 
Adams \cite[Proposition 1]{A3}, detailed proof of which can be found in the paper of Guillot \cite[\S4]{gulliot}. 

\begin{proposition}\label{RepE6}
Set $\alpha:=[V_{\varpi_1}],\beta:=[V_{\varpi_6}],\gamma:=[V_{\varpi_5}]$.   Then 
$[V_{\varpi_2}]=\Lambda^2(\alpha)$, $[V_{\varpi_3}]=\Lambda^2(\beta)$, and 
$[V_{\varpi_4}]=\Lambda^2(\gamma)-\gamma$ and $\Lambda^3(V_{\varpi_1})\cong V_{\varpi_4}\cong \Lambda^3(V_{\varpi_6}).$
In particular, 
\[R\E_6=\mathbb Z[\alpha, \beta, \gamma,  \Lambda^2(\alpha), \Lambda^2(\beta), \Lambda^2(\ga)].\]
\end{proposition}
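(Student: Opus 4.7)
The plan is to verify each stated decomposition by combining a highest-weight computation with a dimension count; the ring presentation then follows by a formal substitution in the already-known polynomial presentation of $R\E_6$.

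For $\Lambda^2\alpha=[V_{\varpi_2}]$, I would use that $V_{\varpi_1}$ is minuscule of dimension $27$: all its weights are Weyl-conjugates of $\varpi_1$ with multiplicity one, so the weights of $\Lambda^2 V_{\varpi_1}$ are precisely the $\binom{27}{2}$ distinct pairwise sums. The largest such sum is $\varpi_1+(\varpi_1-\alpha_1)=2\varpi_1-\alpha_1$, which is dominant and hence the highest weight of $\Lambda^2 V_{\varpi_1}$. Inserting the explicit expressions for $\varpi_1$ and $\alpha_1$, this simplifies to $\tfrac{3}{2}x_1+\tfrac{1}{2}(-x_2+x_3+x_4+x_5)-\tfrac{1}{6}(x_6+x_7+x_8)=\varpi_2$. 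Since $\binom{27}{2}=351=\dim V_{\varpi_2}$ by Table \ref{tab1}, the summand $V_{\varpi_2}$ forced by highest-weight theory exhausts all of $\Lambda^2 V_{\varpi_1}$. The identity $\Lambda^2\beta=[V_{\varpi_3}]$, and later $\Lambda^3 V_{\varpi_1}\cong\Lambda^3 V_{\varpi_6}$, follow by applying the outer automorphism of $\E_6$, which swaps $\varpi_1\leftrightarrow\varpi_6$ and $\varpi_2\leftrightarrow\varpi_3$ while fixing $\varpi_4$.

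The identification $\Lambda^3 V_{\varpi_1}\cong V_{\varpi_4}$ proceeds in the same spirit: a Cartan-matrix computation shows $(\varpi_1-\alpha_1,\alpha_j^\vee)=-\delta_{1j}+\delta_{2j}$, so the unique admissible descent from $\varpi_1-\alpha_1$ is to $\varpi_1-\alpha_1-\alpha_2$, making the three largest weights of the minuscule $V_{\varpi_1}$ equal to $\varpi_1$, $\varpi_1-\alpha_1$, and $\varpi_1-\alpha_1-\alpha_2$. Their sum is $3\varpi_1-2\alpha_1-\alpha_2=2x_1+x_4+x_5=\varpi_4$, and $\binom{27}{3}=2925=\dim V_{\varpi_4}$ forces equality. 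For $\Lambda^2\gamma=\gamma+[V_{\varpi_4}]$, observe that $V_{\varpi_5}\cong\mathfrak e_6$ is the complexified adjoint representation, so the Lie bracket defines a surjective $\E_6$-equivariant map $\Lambda^2 V_{\varpi_5}\twoheadrightarrow V_{\varpi_5}$, and $V_{\varpi_5}$ splits off as a direct summand. The highest root is $\theta=\varpi_5$; since $(\theta,\alpha_j^\vee)=\delta_{j5}$, the only predecessor of $\theta$ in the weights of the adjoint is $\theta-\alpha_5$, and so the highest weight of the complementary summand of $\Lambda^2 V_{\varpi_5}$ is $2\varpi_5-\alpha_5=2x_1+x_4+x_5=\varpi_4$. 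The complement has dimension $\binom{78}{2}-78=2925=\dim V_{\varpi_4}$, so it equals $V_{\varpi_4}$, giving $[V_{\varpi_4}]=\Lambda^2\gamma-\gamma$.

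For the final ring assertion, recall that the simple connectedness of $\E_6$ gives $R\E_6=\mathbb Z[[V_{\varpi_1}],\ldots,[V_{\varpi_6}]]$. The identities already established allow us to substitute $\Lambda^2\alpha$, $\Lambda^2\beta$, and $\Lambda^2\gamma$ for the generators $[V_{\varpi_2}]$, $[V_{\varpi_3}]$, and $[V_{\varpi_4}]$ respectively (the last via $[V_{\varpi_4}]=\Lambda^2\gamma-\gamma$), yielding $R\E_6=\mathbb Z[\alpha,\beta,\gamma,\Lambda^2\alpha,\Lambda^2\beta,\Lambda^2\gamma]$. The main obstacle is not conceptual but the bookkeeping required to express each candidate highest weight in the fundamental-weight basis using the coordinate formulas for $\varpi_j$ and $\alpha_j$, together with the Cartan-matrix computations that identify the second and third admissible weight descents in each of the minuscule and adjoint representations considered.
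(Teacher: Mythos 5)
Your proposal is correct and follows essentially the same route as the paper: identify the highest weight of each exterior power ($2\varpi_1-\alpha_1=\varpi_2$, $3\varpi_1-2\alpha_1-\alpha_2=\varpi_4$, $2\varpi_5-\alpha_5=\varpi_4$), use the Lie bracket to split off $V_{\varpi_5}$ from $\Lambda^2 V_{\varpi_5}$, and close each case with the dimension counts $\binom{27}{2}=351$, $\binom{27}{3}=2925$, $\binom{78}{2}=3003=2925+78$, after which the ring presentation follows by substitution into $\mathbb Z[[V_{\varpi_1}],\ldots,[V_{\varpi_6}]]$. The only cosmetic differences are that you invoke the outer automorphism for the $\varpi_6$-side statements where the paper redoes the weight computation, and you spell out the minuscule/Cartan-matrix bookkeeping slightly more explicitly.
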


\begin{proof} 
It is easily seen that the highest weight that occurs in the representation $\Lambda^2(V_{\varpi_1})$ is $\varpi_1+\varpi_1-\alpha_1=2\varpi_1-\alpha_1=\varpi_2$.  It follows that 
the irreducible representation $V_{\varpi_2} $  occurs 
as a subrepresentation of $\Lambda^2(V_{\varpi_1})$.  From Table \ref{tab1}, we get that 
$\dim (V_{\varpi_2})=351=\binom{27}{2}=\dim \Lambda^2(V_{\varpi_1})$. Hence $\Lambda^2 (V_{\varpi_1})=V_{\varpi_2}$.  The same argument shows that $\Lambda^2(V_{\varpi_6})=V_{\varpi_3}$.  
As remarked already, 
$V_{\varpi_5}=\mathfrak e_6$ is the (complexified) adjoint representation 
$\E_6$.  It follows that $\Lambda^2(V_{\varpi_5})$ contains $V_{\varpi_5}$ as a 
subrepresentation.  In fact, the Lie bracket defines an alternating bilinear map $\mathfrak e_6\mathfrak \times \mathfrak e_6\to \mathfrak e_6$ namely $(X,Y)\mapsto [X,Y]$. 
Thus we obtain homomorphism $\Lambda^2(V_{\varpi_5})\to V_{\varpi_5}$ which is 
a morphism of $\E_6$-modules.   This is a surjection since $\mathfrak e_6$ is a simple Lie algebra.  By complete reducibility 
we obtain that $V_{\varpi_5}$  is a subrepresentation of 
$\Lambda^2(V_{\varpi_5})$.   Again, weight consideration 
shows that $\varpi_5+(\varpi_5-\alpha_5)=2\varpi_5-\alpha_5=\varpi_4$ is the highest weight that occurs in $\Lambda^2(V_{\varpi_5})$ and so $V_{\varpi_4} $ is a subrepresentation of $\Lambda^2(V_{\varpi_5})$.  Furthermore, $\dim(\Lambda^2(V_{\varpi_5}))={78\choose 2}=3003, \dim (V_{\varpi_5})=78$ and $\dim(V_{\varpi_4})$ is known to be $2925$.  It follows 
that
\begin{equation}\label{lambda2adjoint}
\Lambda^2(V_{\varpi_5})\cong V_{\varpi_4}\oplus V_{\varpi_5}.  
\end{equation}
We conclude that $R\E_6=\mathbb Z[\alpha,\beta,\gamma,
\Lambda^2(\alpha),\Lambda^2(\beta),
\Lambda^2(\gamma)]$. 

It remains to show that $\Lambda^3(V_{\varpi_1})\cong \Lambda^3(V_{\varpi_6})\cong V_{\varpi_4}.$
We know that $\dim V_{\varpi_4}=\dim \Lambda^3(V_{\varpi_1})=\dim \Lambda^3(V_{\varpi_6})=2925$ and that 
$3\varpi_1-2\alpha_1-\alpha_2$ (resp. $3\varpi_6-2\alpha_6-\alpha_1) $ is a (highest) weight of 
$\Lambda^3(V_{\varpi_1})$ (resp. $\Lambda^3(V_{\varpi_6}))$.  Since 
$3\varpi_1-2\alpha_1-\alpha_2=\varpi_4=3\varpi_6-2\alpha_6-\alpha_3$,  arguing as before, we have 
$V_{\varpi_4}\cong \Lambda^3(V_{\varpi_1})\cong \Lambda^3(V_{\varpi_6})$. Equation (\ref{lambda2adjoint}) can be rewritten as 
\begin{equation}\label{lambda2adjoint-2}
\Lambda^2(V_{\varpi_5})\cong \Lambda^3(V_{\varpi_6})\oplus V_{\varpi_5}.
\end{equation}
\end{proof}

We end this section with the computation of 
the restriction of the (complexified) adjoint representation $\mathfrak e_8=Lie(\E_8)\otimes \mathbb C$ of $\E_8$ to $\E_6.$  This will aid in describing certain fundamental representations of $\mathfrak e_6$ to $\Spin(10)\cdot \mathbb S^1.$  Although 
the result is known---see \cite{AdamsMahmudMimura}---we give the details for the sake of completeness. 

Since the maximal torus $\rm T^6$ 
is contained in $\Spin(10)\cdot \mathbb S^1\subset 
\E_6$, we need only describe the restriction of 
$\mathfrak e_8$ to $\Spin(10)\cdot \mathbb S^1$. 
Since $Lie(\E_8)=Lie(\Spin(16))
\oplus \Delta_{8, \bb R}^+$, this amounts to finding the 
restrictions of the $\Spin(16)$-representation $Lie(\Spin(16))\otimes \mathbb C \oplus \Delta_{8}^+$ 
to $\Spin(10)\cdot \mathbb S^1\subset \Spin(16)$.

Working in $R\mathbb T_8$, 
the characters $u_j, j=6,7,8$ restrict to 
the identity character $\xi$ of $\mathbb S^1$ and $u_i, 1\le i\le 5,$ to the trivial 
character on $\mathbb S^1$.  Each $u_j, j=6,7,8,$ restricts to $\xi$ on the $\mathbb S^1$ factor. Thus 
$u_6^{\varepsilon_6}u_7^{\varepsilon_7}u_8^{\varepsilon_8}$ restricts to $\xi^{\varepsilon_6+\varepsilon_7+\varepsilon_8}$.
Also $u_i^2u_j^{\pm 2}\mapsto u_i^2\xi^{\pm 2}$
for $1\le i\le 5<j\le 8$, and, $u_i^2u_j^{-2}\mapsto 1, u_i^2u_j^2\mapsto \xi^{\pm 4}$ for $6\le i<j\le 8$.
 It follows that $\Delta^+_8$ restricts to 
$\Delta_5^+\xi^{3}\oplus\Delta_5^-\xi^{-3}\oplus 3\Delta_5^+\xi^{-1}\oplus 3\Delta_5^-
\xi^{1}$ on $\Spin(10)\cdot\mathbb S^1$.

Among the roots of $Lie(\Spin(16))$, $(\pm x_i\pm x_j), 1\le i< j\le 5 ,$ restrict to the roots of $Lie(\Spin (10))$.  
For each fixed $j, 6\le j\le 8$, the set of roots $(\pm x_i+ x_j), 1\le i\le 5,$ correspond to the characters $u_i^{\pm 2}\xi^2$ 
which are precisely the characters of 
$\lambda_1\otimes \mathbb C_{\xi^2}$, where $\lambda_1\cong \bb C^{10}\mathbb =\mathbb R^{10}\otimes \mathbb C$ is the standard 10-dimensional representation on which $\Spin(10)$ acts  via the double covering $\Spin(10)\to \SO(10)$. Similarly 
the roots 
$\pm x_i-x_j, 1\le i\le 5<j\le 8$, correspond to the 
characters of $\mathbb C^{10}\otimes \mathbb C_{\xi^{-2}}.$
Of the remaining roots, each of the six roots 
$\pm (x_6-x_7), \pm(x_6-x_8),\pm(x_7-x_8)$ 
yields the trivial representations, and each of $\varepsilon(x_6+x_7), \varepsilon (x_6+x_8), \varepsilon(x_7+x_8)$ yields  the representation 
$\xi^{4\varepsilon}, \varepsilon\in \{1,-1\}$.  
The trivial character occurs in $Lie(\Spin(16))$
with multiplicity $8=\rank(\Spin(16))$ whereas it occurs in $Lie(\Spin(10)\cdot \mathbb S^1)$ with multiplicity $6$.  All told, we obtained that 
$Lie(\Spin(16))\otimes \mathbb C$ restricts to $Lie(\Spin(10)\cdot \mathbb S^1)\otimes \mathbb C\oplus 3\lambda_1\xi^2\oplus 3\lambda_1\xi^{-2}
\oplus 3\xi^4\oplus 3\xi^{-4}\oplus 8$. 

In the sequel, we shall denote by $\lambda_j,1\le j\le 5$ 
the representation $\Lambda^j(\lambda_1).$

From the above discussion, we conclude that, denoting  by $\rho'$ the restriction $R\E_8\to R(\Spin(10)\cdot \mathbb S^1)$,  
\begin{equation}\label{e8Toe6}
\rho(\mathfrak e_8)=
Lie(\Spin(10)\cdot \mathbb S^1)\otimes \mathbb C\oplus 
\Delta_5^+\xi^{3}\oplus\Delta_5^-\xi^{-3}\oplus \mathbb C^8\oplus  3(V\oplus V')
\end{equation} where 
$V=\lambda_1 \xi^{-2}\oplus \Delta_5^-\xi\oplus \xi^{4},
$ and $V'=\lambda_1 \xi^{2}\oplus \Delta_5^+\xi^{-1}\oplus \xi^{-4}$ are representations of $\Spin(10)\cdot\mathbb S^1.$

Using the fact that the coefficients of $x_j, 6\le j\le 8,$ in the roots of $\mathfrak e_6$ are all equal, it is easily verified that 
\begin{equation}
\mathfrak e_6\cong Lie(\Spin(10)\cdot \mathbb S^1)\otimes \mathbb C\oplus \Delta_5^+\xi^3\oplus \Delta_5^-\xi^{-3}.  
\end{equation}

It follows from (\ref{e8Toe6}) that $V\oplus V'$ are representations of $\E_6$.  

{\it Claim:} 
\begin{equation} \label{vv'}
V_{\varpi_1}\cong V=\lambda_1 \xi^{-2}\oplus \Delta_5^-\xi\oplus \xi^{4};~ V_{\varpi_6}\cong V'=\lambda_1 \xi^{2}\oplus \Delta_5^+\xi^{-1}\oplus \xi^{-4}. 
\end{equation}
It is readily seen 
that (i) 
$\dim V=27,$ and, (ii) 
$x_1-(1/3)(x_6+x_7+x_8)$ is a weight of $V$--- 
in fact 
it is the highest weight of $V.$  It turns out that 
$V$ is in fact a representation of $\E_6$ isomorphic to $V_{\varpi_1}$. 
Similarly, $V'=\lambda_1 \xi^{2}\oplus \Delta_5^+\xi^{-1}\oplus \xi^{-4}
$ is isomorphic to $V_{\varpi_6}$. 
Thus Equation (\ref{e8Toe6}) can be rewritten as 
\begin{equation}\label{e8Toe6-1}
    \tilde\rho(\mathfrak e_8)=\mathfrak e_6\oplus 3(V_{\varpi_1}
    \oplus V_{\varpi_6})\oplus \mathbb C^8
\end{equation}
where $\tilde \rho:R\E_8\to R\E_6$ is the restriction homomorphism.

\section{The Restriction Homomorphism $\rho : R\E_6\to R\Spin(10)$}
\label{reste6Tospin}
We keep the notations of the previous section.
Set $H:=\Spin(10)\cdot \mathbb S^1$.  We shall describe 
the restriction homomorphism $\rho':R\E_6\to RH$.  
Although our interest here is in the restriction 
$\rho:R\E_6\to R\Spin(10),$ the computations of $\rho'$ 
will be more transparent 
since both $\E_6$ and $H$ have the same rank. 
We have (\cite[\S12]{Hu})
$R\Spin(10)=\mathbb Z[\lambda_1, \lambda_2, \lambda_3, \Delta^+_5,\Delta^-_5]$.  We denote by $\pi_j, $
the highest weights of $\lambda_j, 1\le j\le 4.$ 
Then $\pi_1,\pi_2,$ and $\pi_3$ are fundamental weights. 
Note that $V_{\pi_1}$ is the representation of 
$\Spin(10)$ obtained via $\pi:\Spin(10)\to \SO(10)$ from the 
standard representation of $\SO(10)$ on $U:=\mathbb C^{10}$.

Although $V_{\pi_4}=\Lambda^4(U)$ is irreducible, it 
is not a fundamental representation.

$RH$ is the subring of $R(\Spin(10)\times \mathbb S^1)=\mathbb Z[\lambda_1,\lambda_2,\lambda_3,\Delta_5^\pm, \xi^\pm ]$ 
generated by the elements $[V]$ where $V$ is a representation 
of $\Spin(10)\times \mathbb S^1$ on which $(e_1\cdots e_{10}, i)$ acts trivially.  Of course, any representation of $\E_6$ when 
restricted to $H$ satisfies this requirement.

Recall from (\ref{vv'}) that $V=V_{\varpi_1}$ and $V'=V_{\varpi_6}$ restrict to the representations $V=\lambda_1\xi^{-2}+\Delta_5^-\xi+\xi^4$ and  $V^\p=\lambda_1\xi^{2}+\Delta_5^+\xi^{-1}+\xi^{-4}$ on $\Spin (10)\cdot \bb S^1$. 
Since $\mathfrak e_6\cong Lie(\Spin(10)\cdot \mathbb S^1)\otimes \mathbb C\oplus \Delta_5^+\xi^3\oplus \Delta_5^-\xi^{-3}$, as a representation of $\Spin (10)\cdot \bb S^1$,
and since $1+\lambda_2=Lie(\Spin(10)\cdot\mathbb S^1)\otimes \bb C,$ we have 

\begin{equation}\label{rhoprimeadj}
\rho'(V_{\varpi_5}) =1+\lambda_2+\Delta^+_5\xi^3+\Delta^-_5\xi^{-3}.
\end{equation}

The restriction map $\rho^\prime\colon R\E_6\to R(\Spin (10)
\cdot S^1)$ is a $\lambda$-ring homomorphism and observing that $\Lambda^2(\Delta_5^+)=\lambda_3=\Lambda^2(\Delta_5^-)$, routine calculation shows that
\begin{equation}\label{rhoprime1}
\rho^\p(\Lambda^2(V_{\varpi_1}))=\lambda_2\xi^{-4} + \lambda_3\xi^2 + \lambda_1\Delta_5^-\xi^{-1} + \lambda_1\xi^2 + \Delta_5^-\xi^5,
\end{equation}
\begin{equation}\label{rhoprime2}
\rho^\p(\Lambda^2(V_{\varpi_6}))=\lambda_2\xi^4 + \lambda_3\xi^{-2} + \lambda_1\Delta_5^+\xi + \lambda_1\xi^{-2} + \Delta_5^+\xi^{-5},
\end{equation}
\begin{equation}\label{rhoprime3}
\rho^\p(\Lambda^2(V_{\varpi_5}))=\lambda_2+(1+\lambda_2)(\Delta_5^+\xi^3+\Delta_5^-\xi^{-3})+\lambda_3\xi^6+\lambda_3\xi^{-6}+\Lambda^2(\lambda_2)
+\Delta_5^+\Delta_5^-.
\end{equation}

The following lemma allows us to express the term $\Lambda^2(\lambda_2)$ in the last equation in terms of $\lambda_1\lambda_3$ and $\lambda_4$.  
 
\vspace{0.5cm}
\begin{lemma}\label{Lambda2U2}
The relation
\begin{equation}\label{lambda2lambda2}
\lambda_1\lambda_3=\Lambda^2(\lambda_2) +\lambda_4
\end{equation}
holds in the ring $R\Spin(10)$.
\end{lemma}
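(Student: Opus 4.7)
My plan is to derive the stated identity from a universal identity of polynomial functors: for any finite-dimensional complex vector space $V$,
\[
V \otimes \Lambda^3 V \;\cong\; \Lambda^2(\Lambda^2 V) \oplus \Lambda^4 V
\]
as $\GL(V)$-modules. Taking $V = \bbC^{10}$ and restricting along $\Spin(10) \to \SO(10) \hookrightarrow \GL(10, \bbC)$ then yields the lemma. Since $R\Spin(10) \hookrightarrow R\widetilde{\rm T}_5$ is injective, it is enough to verify the identity after passing to a maximal torus, and because the identity is in fact functorial in $V$, checking it on a diagonal torus of $\GL(n, \bbC)$ for arbitrary $n$ suffices.

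The first step is to rewrite both sides as symmetric polynomials. If $x_1, \ldots, x_n$ are the eigenvalues of a diagonal element, then $\chi(\Lambda^k V) = e_k(x_1, \ldots, x_n)$, and the Adams-type formula $\chi_{\Lambda^2 W}(g) = \tfrac12\bigl(\chi_W(g)^2 - \chi_W(g^2)\bigr)$ applied to $W = \Lambda^2 V$ gives
\[
\chi(\Lambda^2(\Lambda^2 V)) \;=\; \tfrac12\bigl(e_2(x)^2 - e_2(x_1^2, \ldots, x_n^2)\bigr).
\]
The lemma therefore reduces to the polynomial identity
\[
2\,e_1 e_3 - 2\, e_4 \;=\; e_2(x)^2 - e_2(x_1^2, \ldots, x_n^2)
\]
in $\bbZ[x_1, \ldots, x_n]$.

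The second step is to verify this symmetric-function identity by comparing coefficients of each monomial type: $x_a x_b x_c x_d$ with four distinct indices gives $8-2 = 6$ on the left and $6 - 0 = 6$ on the right; $x_a^2 x_b x_c$ with three distinct indices gives $2 - 0 = 2$ on each side; $x_a^2 x_b^2$ with $a \ne b$ gives $0$ on the left and $1 - 1 = 0$ on the right; all other monomials occur with coefficient $0$. The only obstacle is the routine combinatorial bookkeeping in this last step, and one can bypass it altogether by invoking the Pieri decomposition $V \otimes \Lambda^3 V \cong \Lambda^4 V \oplus S^{(2,1,1)} V$ together with the plethysm $\Lambda^2(\Lambda^2 V) \cong S^{(2,1,1)} V$.
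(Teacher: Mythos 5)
Your proof is correct, and it takes a genuinely different route from the paper's. The paper works inside $R\Spin(10)$ directly: it exhibits the wedge map $f\colon U_1\otimes U_3\to U_4$ (where $U_j=\Lambda^j(\mathbb C^{10})$), locates the highest weight vector $e_1\otimes e_1e_2e_3$ of weight $\pi_1+\pi_3$ in $\ker f$, observes that $\pi_1+\pi_3$ is the highest weight occurring in $\Lambda^2(U_2)$, and closes the argument with the Weyl dimension formula and the count $990+210=1200$; it also records, as an alternative, a direct weight-multiplicity tabulation (its Table 2). You instead establish the universal $\GL(V)$-identity $V\otimes\Lambda^3V\cong\Lambda^2(\Lambda^2V)\oplus\Lambda^4V$ and restrict it along $\Spin(10)\to\SO(10)\subset\GL(10,\mathbb C)$; this reduces to the symmetric-function identity $2e_1e_3-2e_4=e_2(x)^2-e_2(x_1^2,\dots,x_n^2)$, and your coefficient check (types $x_ax_bx_cx_d$, $x_a^2x_bx_c$, $x_a^2x_b^2$, with all others vanishing) is complete and correct; it is indeed subsumed by Pieri together with the plethysm $\Lambda^2(\Lambda^2V)\cong S^{(2,1,1)}V$. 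What your approach buys: no Weyl dimension formula, no $\Spin(10)$-specific data, and a statement valid for the $\lambda$-operations on any vector bundle or representation, with essentially trivial bookkeeping. What the paper's approach buys: it identifies $\Lambda^2(U_2)$ as the actual irreducible complement of $U_4$ in $U_1\otimes U_3$, which is marginally more information than the virtual identity in the representation ring, though only the latter is used in the sequel. One minor remark: once the identity is phrased at the level of polynomial functors (or in $R\,\GL(10,\mathbb C)$), the passage to the maximal torus $\widetilde{\rm T}_5$ is not strictly needed, since the identity restricts along any homomorphism into $\GL(10,\mathbb C)$; but the torus argument you give is of course also valid.
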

\begin{proof} Let $U_j:=\Lambda^j(\mathbb C^{10}),$ the representation of $\Spin(10)$ with highest weight $\pi_j,$  
$1\le j\le 4.$

We have an obvious surjective homomorphism $f: U_1\otimes U_3\to U_4$ of $\Spin(10)$-representations defined as $u\otimes v\mapsto u\wedge v$. 
In fact the vector $w_1=e_1\otimes e_2e_3e_4-e_2\otimes e_1e_3e_4+e_3\otimes e_1e_2e_4
-e_4\otimes e_1e_2e_3$ is seen to be a highest weight vector of $U_1\otimes U_3$ 
that maps to $4e_1e_2e_3e_4.$  Here, and in what follows,
we have written $uv$ to denote $u\wedge v.$   On the other hand $w_0:=e_1\otimes e_1e_2e_3\in U_1\otimes U_3 $ is a highest weight vector whose weight equals $\pi_1+\pi_3=\pi_1+
3\pi_1-2\alpha_1-\alpha_2-\alpha_3=(2\pi_1-\alpha_1)+(2\pi_1-\alpha_1-\alpha_2)=:\omega$ which 
is the highest among all the weights of $\Lambda^2(U_2).$  Since $w_0\in 
\ker(f)$ and $f$ is a homomorphism of $\Spin(10)$-representation, the $\Spin(10)$-submodule 
generated by $w_0$, denoted $W$ is contained in $U_1\otimes U_3$ and is 
linearly disjoint with $U_4$.  So $W\oplus U_4$ is a subrepresentation of $U_1\otimes U_3$.
It turns out that $\dim W=990=\dim \Lambda^2(U_2).$  This may be verified by 
applying the Weyl dimension formula.  It follows that $\dim W+\dim U_4=990+210=1200=\dim (U_1\otimes U_3).$ So we must 
have $W\oplus U_4\cong U_1\otimes U_3.$   This completes the proof of the lemma. 
\end{proof}

Another approach to proving the above lemma which is conceptually simpler but computationally more involved, is to show the equality of the characters on both sides of 
(\ref{lambda2lambda2}).  This may be done by directly calculating the case at hand rather than 
using the Weyl character formula.  Starting with the standard basis for $U_1=\mathbb C^{10},$ has natural bases 
of $U_2,U_3, U_4$ and also the tensor products $U_1\otimes U_3$ as well as $\Lambda^2(U_2)$, consisting of 
weight vectors.  It is straightforward to 
count with multiplicity the weights of these weight 
vectors. 
 The calculations are tabulated below, but 
the details are left to the reader.  
 Indeed, this was the proof given in \cite{podder}.

\begin{table}[H]
 \centering
 \begin{tabular}{|P{3.5cm}|P{2cm}|P{2cm}|P{2cm}|P{2cm}|}
   \hline
  Weight type 
   & No. of weights  & Mult. in $\Lambda^2(U_2)$ & Mult. in $U_4$ & Mult. in $U_1\otimes U_3$\\ 
   \hline\hline 
   $\pm x_i\pm x_j\pm x_k\pm x_l;$ & $80$  & $3$ & $1$   & $4$\\ 
   $ i<j< k<l$ & & &  &\\
   \hline
   $\pm 2x_i\pm x_j\pm x_k;$ & $240$&$1$&$0$ & $1$   \\ 
    $ i< j< k.$&&&&\\
   \hline
   $\pm x_i\pm x_j; i<j.$& $40$  & $11$ & $3$  & $14$ \\ 
   \hline
    $\pm 2x_i$ & $10$ & $4$       & $0$  & $4$ \\ 
    \hline
    $0$  & $1$  & $30$         & $10$ & $40$ \\ 
    \hline
 \end{tabular}
 \newline\newline
 \caption{Weights of $\Lambda_2(U_2)$ and $U_4$.}\label{tab3}
\end{table}

Recall that the relation $\Delta_5^+\Delta_5^- = 1 + \lambda_2 + \lambda_4$ holds in $\Spin (10)$;  see \cite[Theorem 10.3, Chapter 13]{Hu}. Therefore, from the above lemma and Equation (\ref{rhoprime3}), we get that
\begin{equation}\label{rhoprimelambda2}
\rho^\p(\Lambda^2(\gamma))=1 + 2\lambda_2 + (1+\lambda_2)(\Delta_5^+\xi^3+\Delta_5^-\xi^{-3})+\lambda_3\xi^6+\lambda_3\xi^{-6}+ \lambda_1\lambda_3.
\end{equation}

Let $i^*:R(\Spin(10)\cdot \mathbb S^1)\to R\Spin(10)$ denote the homomorphism of rings induced by the inclusion 
$i\colon \Spin(10)\to \Spin(10)\cdot\bb S^1$.  Then $\rho=i^*\circ \rho^\p\colon R\E_6\to R\Spin(10)$ is the restriction homomorphism induced by the inclusion $\Spin(10)\to \E_6$.  
Then $\rho$ 
is obtained from Equations (\ref{vv'}), (\ref{rhoprimeadj}),
(\ref{rhoprime1}), (\ref{rhoprime2}), and (\ref{rhoprimelambda2}), 
using the fact that $i^*(\xi)=1$, the trivial 1-dimensional representation of $R\Spin(10)$. Therefore, we obtain the following.

\begin{proposition}\label{restrictionformula}
The restriction homomorphism $\rho : R\E_6\to R\Spin(10)$ is given as follows.
(i) ~$\alpha^\p\colon=\rho(\alpha)=1+\lambda_1+\Delta_5^+, $\\ (ii)  ~$\beta^\p\colon=\rho(\beta)=1+\lambda_1+\Delta_5^-, $\\
(iii) ~$\ga^\p\colon=\rho(\ga)=1+\lambda_2+\Delta_5^+ + \Delta_5^-,$ \\ 
(iv)~ $\rho(\Lambda^2(\alpha))=\lambda_2 + \lambda_3 + \lambda_1\Delta_5^- + \lambda_1 + \Delta_5^-,$\\
(v) ~$\rho(\Lambda^2(\beta))=\lambda_2 + \lambda_3 + \lambda_1\Delta_5^+ + \lambda_1 + \Delta_5^+,$\\ 
(vi) ~$\rho(\Lambda^2(\ga))=1+2\lambda_2+(1+\lambda_2)(\Delta_5^++\Delta_5^-)+2\lambda_3+\lambda_1\lambda_3$.\hfill$\blacksquare$
\end{proposition}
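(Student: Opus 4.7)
The plan is to exploit the factorization $\rho = i^* \circ \rho'$, where $\rho' : R\E_6 \to R(\Spin(10)\cdot \mathbb S^1)$ has already been computed in Equations (\ref{vv'}), (\ref{rhoprimeadj}), (\ref{rhoprime1}), (\ref{rhoprime2}), and (\ref{rhoprimelambda2}). Because $i^*$ acts as the identity on the subring $R\Spin(10) \subset R(\Spin(10)\cdot \mathbb S^1)$ and sends $\xi \mapsto 1$, the computation reduces to substituting $\xi = 1$ in each of these previously derived formulas.

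Concretely, I would proceed formula by formula. Parts (i) and (ii) come from (\ref{vv'}) by setting $\xi = 1$ in the restrictions of $V_{\varpi_1}$ and $V_{\varpi_6}$, giving the three-term sums $1 + \lambda_1 + \Delta_5^{\pm}$ immediately. Part (iii) comes from (\ref{rhoprimeadj}) by the same substitution, collapsing $\Delta_5^+\xi^3 + \Delta_5^-\xi^{-3}$ to $\Delta_5^+ + \Delta_5^-$. Parts (iv) and (v) follow by applying $i^*$ to (\ref{rhoprime1}) and (\ref{rhoprime2}) respectively; each monomial in $\xi$ collapses to its $\Spin(10)$-coefficient, and one reads off the claimed sums.

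For part (vi), applying $i^*$ to Equation (\ref{rhoprimelambda2}) gives the formula directly. The nontrivial input there was Lemma \ref{Lambda2U2}, which replaced $\Lambda^2(\lambda_2)$ by $\lambda_1\lambda_3 - \lambda_4$, combined with the classical identity $\Delta_5^+\Delta_5^- = 1 + \lambda_2 + \lambda_4$ from \cite{Hu}; the $\lambda_4$ terms canceled in (\ref{rhoprimelambda2}), leaving a closed-form expression in the chosen generators of $R\Spin(10)$. Setting $\xi = 1$ in that closed form yields (vi).

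In short, there is no real obstacle at this stage: all of the substantive representation-theoretic work (identifying $V_{\varpi_1}, V_{\varpi_6}$ inside the restriction of $\mathfrak e_8$, computing exterior squares via the $\lambda$-ring structure, and applying Lemma \ref{Lambda2U2} to eliminate $\Lambda^2(\lambda_2)$) has already been carried out in preparing (\ref{vv'})--(\ref{rhoprimelambda2}). The proposition is a clean bookkeeping consequence, obtained by the single operation $\xi \mapsto 1$. The only point requiring care is to verify that the substitution respects the subring $RH \subset R(\Spin(10)\times \mathbb S^1)$ — i.e., that each $\rho'$-value really lies in $RH$ — but this is automatic since each arises as the restriction of a genuine $\E_6$-representation.
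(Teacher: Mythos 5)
Your proposal is correct and is exactly the paper's argument: the paper derives the proposition by applying $i^*$ (i.e.\ setting $\xi=1$) to Equations (\ref{vv'}), (\ref{rhoprimeadj}), (\ref{rhoprime1}), (\ref{rhoprime2}), and (\ref{rhoprimelambda2}), with all the substantive work already done in establishing those formulas and Lemma \ref{Lambda2U2}. The only caveat is the assignment of $\Delta_5^+$ versus $\Delta_5^-$ in parts (i)--(ii): literal substitution of $\xi=1$ in (\ref{vv'}) gives $\rho(\alpha)=1+\lambda_1+\Delta_5^-$, consistent with part (iv), so the superscripts in (i)--(ii) of the stated proposition appear to be swapped relative to (\ref{vv'}) --- an inconsistency in the paper itself that is harmless downstream since $\Delta_5^+=\Delta_5^-$ in the quotient $B$.
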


\section{Proofs of Theorems \ref{main1} and \ref{main2}}

We begin with a brief description of the Hodgkin spectral sequence and the change of rings spectral sequence.  

\subsection{Hodgkin Spectral sequence}\label{hss}
We shall now describe briefly the Hodgkin spectral sequence.  For a more detailed 
exposition we refer the reader to \cite{RO}, \cite{AGUZ}, 
\cite{BF}, or \cite{podder-sam=nkaran}. 

Let $G$ be a compact connected Lie group and $H$ be a closed subgroup.  Let $\rho: RG\to RH$ be the restriction homomorphism and let $RG\to \mathbb Z$ be the augmentation.
We regard $RH$ and $\mathbb Z$ as $RG$-modules via these homomorphisms.  The following theorem is due to Hodgkin \cite{HO}.

\begin{theorem}  Let $G$ be a compact Lie group and $H$ an closed subgroup of $G$.   Suppose that $\pi_1(G)$ is torsion-free.  There exists a spectral sequence that converges to $K^*(G/H)$ whose $E_2$-page is  
\[E_2^{p,q}=\begin{cases}
    \tor^p_{RG}(RH, \mathbb Z), &\textrm{
    ~if~} q\equiv 0\pmod 2\\
    0, &\textrm{~otherwise.}
\end{cases}\]
The differential $d_r$ has bidegree $(r,1-r)$. In particular, 
    $d_r=0$ for all $r$ even. 
\end{theorem}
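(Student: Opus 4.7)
The plan is to derive this spectral sequence as a special case of a Künneth-type spectral sequence in $G$-equivariant $K$-theory. First, I would appeal to (or construct) the equivariant Künneth spectral sequence: for compact $G$-CW complexes $X$ and $Y$ there is a spectral sequence
\[E_2^{p,q} = \tor^{p,q}_{RG}(K^*_G(X), K^*_G(Y)) \Rightarrow K^{p+q}_G(X \times Y),\]
with $d_r$ of bidegree $(r, 1-r)$. The construction proceeds by resolving $K^*_G(X)$ as an $RG$-module by free modules realized geometrically as $K^*_G$ of certain $G$-CW complexes, smashing the geometric resolution with $Y$, and reading off the induced filtration spectral sequence.

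Second, I would specialize to $X = G$ (acting on itself by left translation) and $Y = G/H$. One has $K^*_G(G) = K^*(\mathrm{pt}) = \mathbb Z$, with the $RG$-module structure given by the augmentation, while $K^*_G(G/H) = K^*_H(\mathrm{pt}) = RH$ by the Atiyah--Segal induction-restriction adjunction. Furthermore, the diagonal $G$-action on $G \times G/H$ is free, and the change of variables $(g, xH) \mapsto (g, g^{-1}xH)$ identifies the orbit space with $G/H$ carrying the trivial action, so $K^*_G(G \times G/H) = K^*(G/H)$. Substituting these identifications into the Künneth spectral sequence yields precisely the claimed $E_2$-page $\tor^p_{RG}(RH, \mathbb Z)$ abutting to $K^*(G/H)$. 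The concentration of the $E_2$-page in even $q$ and the vanishing of $d_r$ for even $r$ both follow from the $\mathbb Z/2$-grading of $K$-theory: since $\mathbb Z$ and $RH$ live in the even summand, so does every $\tor^p_{RG}(RH, \mathbb Z)$, and a differential $d_r$ of bidegree $(r,1-r)$ with $r$ even would map even-$q$ entries to odd-$q$ entries, which vanish.

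The main obstacle is the first step: producing the geometric free resolution underlying the equivariant Künneth spectral sequence. This is precisely where the hypothesis on $\pi_1(G)$ enters. Hodgkin's theorem asserts that $K^*(G)$ is a torsion-free exterior algebra exactly when $\pi_1(G)$ is torsion-free, and this structural result is what permits one to lift free $RG$-resolutions to geometrically realized $G$-CW resolutions and hence to construct the desired spectral sequence. In the case $G = \E_6$ of interest here, the group is simply connected and so the hypothesis is automatic.
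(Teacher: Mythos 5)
The paper does not prove this theorem at all: it is quoted as a known result of Hodgkin, with \cite{HO} (and the expositions \cite{RO}, \cite{AGUZ}, \cite{BF}) cited in place of a proof. So there is no in-paper argument to compare yours against; what you have written is an outline of the standard derivation, and it is essentially the route Hodgkin himself takes. Your specializations are all correct: $K^*_G(G)\cong\mathbb Z$ with the augmentation module structure, $K^*_G(G/H)\cong RH$ by induction--restriction, and $K^*_G(G\times G/H)\cong K^*(G/H)$ via the shearing map that makes the diagonal action free with quotient $G/H$. The parity argument for the vanishing of the $E_2$-page in odd $q$ and of $d_r$ for $r$ even is also fine.

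That said, as a proof your proposal stops exactly where the real work begins, and one attribution is off. The two genuinely hard points in Hodgkin's construction are (i) realizing a free $RG$-resolution of $K^*_G(X)$ geometrically, which requires producing $G$-spaces whose equivariant $K$-theory is a prescribed free $RG$-module together with maps inducing the resolution differentials, and (ii) proving that the resulting spectral sequence converges strongly to $K^*_G(X\times Y)$. The torsion-freeness of $\pi_1(G)$ enters principally in (ii): under that hypothesis $RG$ is the representation ring of a group whose derived subgroup is simply connected, $RG$ has finite global dimension, and the filtration stabilizes; your appeal to ``$K^*(G)$ is a torsion-free exterior algebra exactly when $\pi_1(G)$ is torsion-free'' both overstates the equivalence (only the forward implication is Hodgkin's theorem) and misplaces where the hypothesis is actually used. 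Since the paper only needs the statement for the simply connected group $\E_6$, citing Hodgkin as the authors do is the appropriate course; if you wanted a genuine proof you would need to supply (i) and (ii) in detail.
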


In the above theorem, $\tor^p_R(M,N)$ is, by definition, $\tor_{-p}^R(M,N)$.  
In order to compute $\tor^*_{RG}(RH,\mathbb Z)$, one uses the `change of rings'
theorem.  This is a spectral sequence, referred to as the {\it change of ring spectral sequence} in the sequel.  Its $E_2$-page is defined as follows:
$E_2^{p,q}=\tor_\Omega^p(\tor_\Lambda^q(RH,\mathbb Z),\mathbb Z)$ where $\Lambda$ 
is a conveniently chosen subring of $RG$ and $\Omega$ is the quotient of 
$RG$ by the ideal generated by the kernel of the augmentation 
$\Lambda\to \mathbb Z$.  The spectral sequence converges to $\tor^*_{RG}(RH,\mathbb Z)$.  
This is a special case of a very general result established in \cite[p. 379]{CE}.   

We shall find a subalgebra $\Lambda$
of $R\E_6$ over which $R\Spin(10)$ is free.  This will allow us to 
apply the change of ring theorem to simplify the computation of  $\mathrm{\tor}^*_{R\E_6}(R\Spin(10),\mathbb Z).$  The appropriate 
choice of $\Lambda$ is given in the following lemma.

\begin{lemma}\label{freeness_E6}
Let $\Lambda$ be the subring $\mathbb Z[\alpha, \beta, \gamma, \Lambda^2(\alpha)]$ of $R\E_6$. Then $R\Spin(10)$ is a free $\Lambda$-module 
(via the restriction homomorphism) with basis $\{\lambda_1^i:i\ge 0\}$ over $\Lambda$.
\end{lemma}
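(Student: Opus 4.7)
Since by Proposition~\ref{RepE6}, $R\E_6$ is the polynomial ring $\mathbb{Z}[\alpha,\beta,\gamma,\Lambda^2(\alpha),\Lambda^2(\beta),\Lambda^2(\gamma)]$, the subring $\Lambda=\mathbb{Z}[\alpha,\beta,\gamma,\Lambda^2(\alpha)]$ is itself a polynomial ring in four algebraically independent generators. My plan is to exhibit a ring isomorphism
\[
\phi\colon \Lambda[T]\stackrel{\cong}{\longrightarrow} R\Spin(10)
\]
which extends the restriction $\rho|_{\Lambda}$ and sends $T\mapsto \lambda_1$. Since $\Lambda[T]$ is visibly free as a $\Lambda$-module on $\{T^{i}\}_{i\ge 0}$, such an isomorphism immediately yields the lemma, with the basis $\{T^{i}\}$ corresponding to $\{\lambda_1^{i}\}$.

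The map $\phi$ is unambiguously determined by $\phi|_{\Lambda}=\rho|_{\Lambda}$ and $\phi(T)=\lambda_1$. To build an inverse $\psi\colon R\Spin(10)\to \Lambda[T]$, I will invert the formulas of Proposition~\ref{restrictionformula}. Parts (i) and (ii) give at once $\Delta_{5}^{+}=\rho(\alpha)-1-\lambda_1$ and $\Delta_{5}^{-}=\rho(\beta)-1-\lambda_1$. Substituting these into (iii) expresses $\lambda_2$ as a degree-one polynomial in $\lambda_1$ with coefficients in $\rho(\Lambda)$, and substituting all three into (iv) then expresses $\lambda_3$ as a degree-two polynomial in $\lambda_1$ over $\rho(\Lambda)$. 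Because $R\Spin(10)=\mathbb{Z}[\lambda_1,\lambda_2,\lambda_3,\Delta_{5}^{+},\Delta_{5}^{-}]$ is freely generated as a polynomial ring by these five classes, the corresponding expressions read off in $\Lambda[T]$ (replacing $\lambda_1$ by $T$ and $\rho(\alpha),\rho(\beta),\rho(\gamma),\rho(\Lambda^2(\alpha))$ by $\alpha,\beta,\gamma,\Lambda^2(\alpha)$) define a unique ring homomorphism $\psi$.

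It will then remain to verify $\phi\psi=\mathrm{id}_{R\Spin(10)}$ and $\psi\phi=\mathrm{id}_{\Lambda[T]}$. The former holds on the five polynomial generators of $R\Spin(10)$ by the very construction of $\psi$, hence on all of $R\Spin(10)$. The latter reduces to checking that $\psi$ recovers $\alpha,\beta,\gamma,\Lambda^2(\alpha)$ from the right-hand sides of (i)--(iv), and that $\psi(\lambda_1)=T$; these are short mechanical substitutions using Proposition~\ref{restrictionformula}. I anticipate no conceptual difficulty, since the equations of Proposition~\ref{restrictionformula} are triangular in the unknowns $\Delta_{5}^{+},\Delta_{5}^{-},\lambda_2,\lambda_3$ with $\lambda_1$ playing the role of a free parameter, forcing the inversion to succeed. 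The only step demanding any care is the bookkeeping for the quadratic-in-$\lambda_1$ expression for $\lambda_3$, and this is where I would proceed most cautiously.
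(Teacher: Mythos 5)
Your proposal is correct and follows essentially the same route as the paper: both exploit the triangular structure of Proposition \ref{restrictionformula} to solve for $\Delta_5^{+},\Delta_5^{-},\lambda_2,\lambda_3$ in terms of $\rho(\alpha),\rho(\beta),\rho(\gamma),\rho(\Lambda^2(\alpha))$ and $\lambda_1$, which is exactly how the paper identifies $R\Spin(10)$ with $\rho(\Lambda)[\lambda_1]$. Your packaging via an explicit pair of mutually inverse homomorphisms $\phi,\psi$ is a slightly tidier way of recording the same computation (it delivers the injectivity of $\rho|_{\Lambda}$, i.e.\ the algebraic independence of the images, as a byproduct rather than as a separate step), but the underlying argument is identical.
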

\begin{proof}
Let $\Lambda^\p\colon=\rho(\Lambda) = \mathbb Z[\alpha^\p, \beta^\p, \ga^\p, \Lambda^2(\alpha^\p)]$. Let us write $\ga^\dprime=\gamma^\p-\alpha^\p-\beta^\p$. Then $\Lambda^\p=\mathbb Z[\alpha^\p, \beta^\p, \ga^\dprime, \Lambda^2(\alpha^\p)]$ and, it is enough to check that $\alpha^\p, \beta^\p, \ga^\dprime, \Lambda^2(\alpha^\p)$ are algebraically independent.

Using Proposition \ref{restrictionformula}, we have 
$\alpha^\p=f(\lambda_1)+\Delta_5^-, 
~\beta^\p=f(\lambda_1)+\Delta_5^+,~\ga^\dprime= g(\lambda_1) +\lambda_2$ and $\Lambda^2(\alpha^\p)=h(\lambda_1, \lambda_2, \Delta_5^-)+ \lambda_3$, where $f, g, h$ are polynomials in indicated variables. 
Suppose that $F(\alpha^\p, \beta^\p, \ga^\dprime, \Lambda^2(\alpha^\p))=0$.   Since $\lambda_3, \lambda_1, \lambda_2, \Delta_5^-, \Delta_5^+$ are algebraically independent, and since $\Lambda^2(\alpha')$ is involves 
$\lambda_3$ and the others, $\alpha',\beta',\gamma''$ do not involve $\lambda_3$, 
we see that $F$ must be 
independent of $\Lambda^2(\alpha')$. 
Since there is no polynomial relation among $\lambda_2, \Delta_5^+, \Delta_5^-$ over $\mathbb Z[\lambda_1]$, we conclude that $F$ must be the zero polynomial. This proves the algebraic independence of $\alpha^\p, \beta^\p, \ga^\p, \Lambda^2(\alpha^\p)$.

We have, $\Delta_5^-=\alpha^\p-f(\lambda_1), \Delta_5^+=\beta^\p-f(\lambda_1), \lambda_2=\ga^\p-\alpha^\p-\beta^\p-g(\lambda_1)$. By substituting $\Delta_5^-$ and $\lambda_2$ from the above in $\lambda_3= \Lambda^2(\alpha^\p)-\lambda_1-(1+\lambda_1)\Delta_5^--\lambda_2$ we see $R\Spin(10)\cong\Lambda^\p[\lambda_1]$, as a $\Lambda^\p$-module.
\end{proof}

\subsection{Computation of $\tor^*_{R\E_6}(RH,\mathbb Z)$} \label{changeofrings}
In order to compute the $E_2$-page of 
the Hodgkin spectral sequence 
$ \tor^*_{R\E_6}(RH,\mathbb Z)$, which  
converges to $K^*(\E_6/\Spin(10))$, 
we will apply the change of ring spectral sequence  
(\cite[\S5,Chapter XVI]{CE}), in the case following situation (in the notation 
of \cite{CE}):
\[\Gamma=R\E_6,  A=R\Spin (10),~ K=C=\mathbb Z \textrm{~and~} \Lambda:=\mathbb Z[\alpha, \beta, \ga, \Lambda^2(\alpha)]\subset \Gamma=R\E_6.\]
Then $A$ is a free $\Lambda$-module via the restriction homomorphism, in view of Lemma \ref{freeness_E6}.    
Then 
\begin{equation}\label{changering}
\textrm{Tor}^\Omega_*(\textrm{Tor}_*^\Lambda (R\Spin(10),\mathbb Z),\mathbb Z) \implies \textrm {Tor}^{R\E_6}_*(R\Spin(10),\mathbb Z),
\end{equation}
where 
$\Omega=R\E_6/\langle \alpha-27,\beta-27,\gamma-78,\Lambda^2(\alpha)-351\rangle$.  
In view of Lemma \ref{freeness_E6},
we have $\tor^\Lambda_*(R\Spin(10),\mathbb Z)=\tor^\Lambda_0(R\Spin(10),\mathbb Z)=R\Spin(10)\otimes_\Lambda \mathbb Z$ and 
the above spectral sequence collapses. 
So we have $\tor^{R\E_6}_*(R\Spin(10),\mathbb Z)\cong \tor^\Omega_*(B,\mathbb Z)$
where 
\begin{equation}\label{B}
B_q=\textrm{Tor}^\Lambda_q(R\Spin(10),\mathbb Z)=\begin{cases} RH\otimes _\Lambda \bb Z,
&\textrm{if~}q=0,\\
0,\textrm{~if~} q\ne 0.
\end{cases}.
\end{equation}

Thus, 
\[B=B_0=RH/\langle \alpha^\p-27, \beta^\p-27, \ga^\p-78, \Lambda^2(\alpha^\p)-{27\choose 2} \rangle.\] 

In view of Lemma \ref{freeness_E6}, $B$ is isomorphic to the 
polynomial algebra 
$\mathbb Z [\lambda_1].$

We shall continue to denote by $\lambda_j, j=1,2,3$, the images in $B$ of the elements 
$\lambda_j\in RH.$  

\begin{lemma}\label{Res}
The following relations hold in $B$:\\
(a) $\Delta_5^+ = 26-\lambda_1=\Delta_5^-$,\\
(b) $\lambda_2 = 25+2\lambda_1$,\\
(c) $\lambda_3 = \lambda_1^2-28\lambda_1+300$,\\
(d) $\Lambda^2(\beta^\p)-{27\choose 2}=0$,\\
(e) $\Lambda^2(\ga^\p)-{78\choose 2}=(\lambda_1-10)^3$.
\end{lemma}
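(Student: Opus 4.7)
The plan is to derive all five relations by straightforward substitution, using the explicit formulas for $\rho$ given in Proposition \ref{restrictionformula} together with the four relations defining the quotient $B$. Since Lemma \ref{freeness_E6} identifies $B$ with the polynomial ring $\mathbb Z[\lambda_1]$, everything is a computation modulo the ideal $\langle \alpha^\p-27,\beta^\p-27,\ga^\p-78,\Lambda^2(\alpha^\p)-351\rangle$.

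First, parts (a), (b), and (c) should be obtained in order, each solving for one new generator of $R\Spin(10)$ in terms of $\lambda_1$. Specifically: from $\alpha^\p=1+\lambda_1+\Delta_5^+=27$ and $\beta^\p=1+\lambda_1+\Delta_5^-=27$ I read off $\Delta_5^{\pm}=26-\lambda_1$, which is (a). Substituting this into $\ga^\p=1+\lambda_2+\Delta_5^++\Delta_5^-=78$ gives $\lambda_2=25+2\lambda_1$, which is (b). Next, plugging (a) and (b) into Proposition \ref{restrictionformula}(iv),
\[\Lambda^2(\alpha^\p)=\lambda_2+\lambda_3+\lambda_1\Delta_5^-+\lambda_1+\Delta_5^-=351,\]
the only unknown is $\lambda_3$, and solving produces $\lambda_3=\lambda_1^2-28\lambda_1+300$, which is (c).

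For (d) I would simply substitute (a), (b), (c) into Proposition \ref{restrictionformula}(v):
\[\Lambda^2(\beta^\p)=\lambda_2+\lambda_3+\lambda_1\Delta_5^++\lambda_1+\Delta_5^+.\]
Since $\Delta_5^+=\Delta_5^-$ in $B$ by (a), the right-hand side reduces to exactly the expression that defines $\Lambda^2(\alpha^\p)$ via (iv), so equals $351=\binom{27}{2}$ by the defining relations of $B$; this is one quick way to do (d) without re-expanding the polynomial.

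Finally, for (e), I substitute (a), (b), (c) into Proposition \ref{restrictionformula}(vi):
\[\Lambda^2(\ga^\p)=1+2\lambda_2+(1+\lambda_2)(\Delta_5^++\Delta_5^-)+2\lambda_3+\lambda_1\lambda_3.\]
Each piece becomes a polynomial in $\lambda_1$: the last term is $\lambda_1(\lambda_1^2-28\lambda_1+300)$ and contributes the leading $\lambda_1^3$. Grouping by degree and subtracting $\binom{78}{2}=3003$ yields $\lambda_1^3-30\lambda_1^2+300\lambda_1-1000=(\lambda_1-10)^3$. Since these computations are purely polynomial arithmetic in $\mathbb Z[\lambda_1]$, no step is a conceptual obstacle; the main care needed is to track the constant and linear coefficients correctly in (e), where cancellations across five terms give the clean binomial $(\lambda_1-10)^3$. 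This cubic will of course be the key input for the $K$-ring computation that follows, since $u=\lambda_1-10$ is precisely the generator named in Theorem \ref{main1}.
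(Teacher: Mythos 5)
Your proposal is correct and follows essentially the same route as the paper: (a)--(c) are obtained by successively solving the defining relations of $B$ using Proposition \ref{restrictionformula}(i)--(iv), (d) follows from $\Delta_5^+=\Delta_5^-$ forcing $\Lambda^2(\beta^\p)=\Lambda^2(\alpha^\p)$, and (e) is the same polynomial substitution into (vi), yielding $\lambda_1^3-30\lambda_1^2+300\lambda_1+2003=(\lambda_1-10)^3+\binom{78}{2}$. The arithmetic checks out, so no changes are needed.
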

\begin{proof}
The proof of (a) and (b) are immediate from 
Proposition \ref{restrictionformula}(i) and (ii), respectively.   

In view of Proposition \ref{restrictionformula}
(iv), we have the relation ${27\choose 2}=\Lambda_2(\alpha')=\lambda_2+\lambda_3+\lambda_1+(\lambda_1+1)\Delta_5^-$ in $B$.  Using (a) and (b) we obtain 
${27\choose 2}-\lambda_3= (25+2\lambda_1)+(\lambda_1+1)(26-\lambda_1) +\lambda_1.$   
Now (c) follows up on simplification.   

(d). Using Proposition \ref{restrictionformula}(i) and (ii) again, we have $\Delta_5^+-\Delta_5^-=\beta^\p-\alpha^\p=0$ in $B$.  Now Proposition \ref{restrictionformula} (iv) and (v) shows that 
$\Lambda^2(\beta')=\Lambda^2(\alpha').$ 
 Therefore, $\Lambda^2(\beta^\p)-{27\choose 2}=\Lambda^2(\alpha^\p)-{27\choose 2}=0$ in $B$. 

(e) Recall from Proposition \ref{restrictionformula}(iii), (vi) that  the relations 
$\gamma'=1+\lambda_2+\Delta_5^++\Delta_5^-$ and 
$\Lambda^2(\ga^\p)=1+2\lambda_2+(1+\lambda_2)(\Delta_5^++\Delta_5^-)+2\lambda_3+\lambda_1\lambda_3$ hold in $\Spin(10).$  Using relations $\gamma'=78$ and the relations  (a) to (d) of the lemma which have just been proved,  
we obtain 
\begin{align*}
\Lambda^2(\ga^\p) & = (1+\lambda_2+\Delta_5^++\Delta_5^-)+\lambda_2(1+\Delta_5^++\Delta_5^-)+2\lambda_3+\lambda_1\lambda_3\\
                      & = \ga^\p+\lambda_2(\ga^\p-\lambda_2)+2\lambda_3+\lambda_1\lambda_3\\
                      & = 78 + \lambda_2(78-\lambda_2) + 2\lambda_3 + \lambda_1\lambda_3\\
                      & = 78 + (2\lambda_1+25)(53-2\lambda_1)+ (\lambda_1+2)(\lambda_1^2-28\lambda_1+300)\\
                      & = \lambda_1^3-30\lambda_1^2 +300\lambda_1+2003\\
                      & = (\lambda_1-10)^3+3003.\\ 
\end{align*}
Since $\gamma'=78$ we have $\Lambda^2(\gamma')={78\choose 2}=3003$ in $B$.  The last equation now implies that $(\lambda_1-10)^3=0$.
\end{proof}

We are now ready to prove Theorem \ref{main1}.

\noindent{\it Proof of Theorem \ref{main1}}:
Since $B_q=0$ for all $q>0$, the change of rings spectral sequence $\textrm{Tor}_*^\Omega(\textrm{Tor}^\Lambda(RH,
\mathbb Z),\mathbb Z)=\textrm{Tor}^\Omega_*(B_*,\mathbb Z)$ which converges to $\textrm{Tor}^{R\E_6}_*(R\Spin(10),\mathbb Z)$ collapses 
and we get $\tor^{R\E_6}_q(R\Spin(10),\mathbb Z)\cong \tor^\Omega_q(B,\mathbb Z)$, where 
$\Omega=R\E_6/\langle \alpha-27,\beta-27,\gamma-78,\Lambda^2(\alpha)-351\rangle=\mathbb Z[\Lambda^2(\beta), \Lambda^2(\gamma)]$.

 Let us write $x=\Lambda^2(\beta)-{27\choose 2}$ and $y=\Lambda^2(\ga)-{78\choose 2}$. Then $\Omega=\mathbb Z[x, y]$, and the Koszul resolution $\mathcal C$ of the polynomial 
 ring $\Omega$ is 
\[0\to\Omega\xrightarrow{d}\Omega\oplus\Omega\xrightarrow{d}\Omega\xrightarrow{\epsilon} \mathbb Z\to 0\]
where $\epsilon:\Omega\to \mathbb Z$ is the augmentation map defined as $x\mapsto 0, y\mapsto 0$. We shall denote by $X,Y$ the basis of $\Omega_1:=\Omega\oplus \Omega$ so that 
the basis of $\Omega_2\cong \Omega$ is $X\wedge Y.$  The differentials of the Koszul complex are defined by 
$d(X)=x=\Lambda^2(\beta^\p)-{27\choose 2}, ~d(Y)=y=\Lambda^2(\gamma^\p)-{78\choose 2}$ and $d(X\wedge Y)=d(X)Y-d(Y)X.$
Therefore, since $B$ is a $\Omega$-module via the homomorphism $\bar{{\rho}}$ induced by $\rho$, on tensoring with $B$, we obtain the following chain complex $\mathcal{C}\otimes B$: \[0\to B\xrightarrow{\bar{d}} B\oplus B\xrightarrow{\bar{d}} B\to 0\]
where, in view of Lemma \ref{Res},  $\bar{d}(X)=\bar{\rho}(x)=0, ~\bar{d}(Y)=\bar{\rho}(y)=(\lambda_1-10)^3$ and $\bar{d}(X\wedge Y)=\bar{\rho}(x)Y-X\bar{\rho}(y)=(\lambda_1-10)^3X$.  Therefore, $\tor_0^{\Omega}(B, \mathbb Z)=H^0(\mathcal{C}\otimes B)=B/\langle(\lambda_1-10)^3\rangle$ and $\tor_1^{\Omega}(B, \mathbb Z)=H^1(\mathcal{C}\otimes B)=BX/\langle(\lambda_1-10)^3X\rangle\cong B/\langle(\lambda_1-10)^3\rangle$. 
Let 
\begin{equation}\label{generatorK}
    u=\lambda_1-10.
\end{equation} 
Since $\tor^*_\Omega(B, \bb Z)=\tor_{-*}^\Omega(B,\mathbb Z)$ is generated, as a ring, by elements of 
degree $\ge -1,$  
by \cite[Proposition, \S1.3]{RO}, we conclude that $\tor_*^\Omega(B;\mathbb Z)$
is isomorphic to $K^*(\E_6/\Spin(10)).$
That is $K^0(\E_6/\Spin(10))=\mathbb Z[u]/\langle u^3\rangle,$ and 
$K^{1}(\E_6/\Spin(10))=K^0(\E_6/\Spin(10))\cdot X$,
the free $K^0(\E_6/\Spin(10))$ of rank $1$ generated by $X.$
This completes the proof of 
Theorem \ref{main1}. \hfill $\blacksquare$

\subsection{Complexification and realification}\label{candr}
Let $c:RO(G)\to RG$ and $r:RG\to RO(G)$ denote the 
 complexification and the `forgetful' homomorphism 
 obtained by restricting the scalar multiplication from $\mathbb C$ to $\mathbb R$ respectively. The homomorphism $r$ will be referred to as the realification homomorphism. 
 We shall use the same symbols for the analogous homomorphisms 
 $KO^0(X)\to K^0(X)$ and $K^0(X)\to KO^0(X)$ for topological spaces. We note that in both contexts, $c$ is a ring homomorphism, whereas $r$ is only a group 
 homomorphism. As is well-known $c\circ r=1+\bar{~}$, where in both contexts, $\bar{~}$ denotes the complex conjugation, and $r\circ c=\times 2$. 
 
 When $H\subset G$ is a closed subgroup of a compact connected Lie group and $X=G/H,$ the maps $c$ and $r$ commute with the $\alpha$-construction, i.e., 
 the following diagram commutes:

\begin{figure}[H]
            \[\begin{tikzcd}
         RO(H)\arrow[rr, ""] \arrow[d, "\alpha_0"]&&  RH \arrow[rr ,""]\arrow[d, "\alpha"]&& RO(H)\arrow[d, "\alpha_0"] \\
         KO^0(G/H) \arrow[rr, ""] && K(G/H)\arrow[rr ,""]&&  KO^0(G/H)
    \end{tikzcd}\]

    \centering
\caption{}
\end{figure}

 For a more detailed discussion, see \cite[\S11, Chapter 13]{Hu}.
We remark that if $[V]\in RO(H)$ is the restriction to $H$ of a 
$G$-representation, then $\alpha_0([V])=\dim V$ \cite[p.209, Example 2]{GHV}. An analogous statement holds for $\alpha.$

Since $G$ is compact, $RO(G)$ (resp. $RG$) is a free abelian group with basis the set of isomorphism classes of irreducible 
$G$-representation over $\mathbb R$ (resp. $\mathbb C$), and since $c\circ r=\times 2$, $c$ is a monomorphism. 

Also, $c(\Lambda_\mathbb R^j([V]))=\Lambda^j_{\mathbb C}(c[V])~\forall j\ge 0.$ for any real representation $V$.  Thus $c$ is a $\lambda$-ring homomorphism.   (However, $\mathbb R=\Lambda^2(r(\mathbb C))\ne r(\Lambda^2(\mathbb C))=r(0)=0$.)
Analogous statements hold for vector bundles.
These properties will be used below without explicit reference. 

\subsection{The tangent bundle of $\E_6/\Spin(10)$}\label{immersion}
Let us denote by $\lambda_{1, \mathbb R}\in RO(\Spin(10))$ the class of the standard representation of $\SO(10)$ on $\mathbb R^{10}$ regarded as a representation of $\Spin(10)$. Denote by $\lambda_{j,\mathbb R}$ the representation 
$\Lambda^j(\lambda_{1,\mathbb R})$.  
Thus $\lambda_{j,\mathbb R}\otimes \mathbb C=\lambda_j\in R(\Spin(10)).$
Note that $\lambda_{2,\mathbb R}=Lie(\Spin(10))$, the adjoint representation of $\Spin(10).$

\begin{proposition}\label{tangent1}
    Let $\tau$ be the tangent  bundle of $\E_6/\Spin(10)$. 
    Then 
    \[[ \tau]=[53-2\lambda_{1, \bb R}]\in KO(\E_6/\Spin(10)).\]
\end{proposition}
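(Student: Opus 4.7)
The plan is to identify the isotropy representation $\tau_0$ of $\Spin(10)$ on the tangent space of $M=\E_6/\Spin(10)$ at the identity coset, express $[\tau_0]$ in $RO(\Spin(10))$, then pass to $KO(M)$ via the $\alpha_0$-construction and simplify using the restriction relations from Proposition \ref{restrictionformula}.

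First, since $\mathbb S^1$ commutes with $\Spin(10)$ inside $\Spin(10)\cdot\mathbb S^1\subset \E_6$, the circle acts trivially on $Lie(\mathbb S^1)$, so
\[
\tau_0 = Lie(\E_6)/Lie(\Spin(10)) \cong \mathbb R \oplus \mathfrak m_0,
\]
where $\mathfrak m_0$ is the orthogonal complement of $Lie(\Spin(10)\cdot\mathbb S^1)$ in $Lie(\E_6)$. The decomposition $\mathfrak e_6\cong Lie(\Spin(10)\cdot\mathbb S^1)\otimes\mathbb C \oplus \Delta_5^+\xi^3\oplus\Delta_5^-\xi^{-3}$ established earlier, restricted to $\Spin(10)$ (i.e., setting $\xi=1$), gives $\mathfrak m_0\otimes\mathbb C\cong\Delta_5^+\oplus\Delta_5^-$ as complex $\Spin(10)$-modules.

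Next, I would lift this to $RO(\Spin(10))$. Since $\Delta_5^+$ and $\Delta_5^-$ are complex conjugates of each other for $\Spin(10)$ (the standard reality story for half-spin representations of $\Spin(2n)$ with $n$ odd), one has $c(r(\Delta_5^+))=\Delta_5^++\Delta_5^-=c(\mathfrak m_0)$ in $R\Spin(10)$. Injectivity of $c\colon RO(\Spin(10))\to R\Spin(10)$, noted in \S\ref{candr}, then forces $\mathfrak m_0 = r(\Delta_5^+)$, so
\[
[\tau_0] = 1 + r(\Delta_5^+) \in RO(\Spin(10)).
\]

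Finally, applying the $\alpha_0$-construction and using the commutativity $\alpha_0\circ r=r\circ\alpha$ from \S\ref{candr},
\[
[\tau] = \alpha_0(1) + \alpha_0(r(\Delta_5^+)) = 1 + r(\alpha(\Delta_5^+)) \in KO(M).
\]
Since $\rho(\alpha)=1+\lambda_1+\Delta_5^+$ by Proposition \ref{restrictionformula}(i) and $V_{\varpi_1}$ has dimension $27$, the $\alpha$-construction sends $\rho(\alpha)$ to the rank-$27$ trivial bundle, giving $\alpha(\Delta_5^+)=26-\alpha(\lambda_1)$ in $K(M)$. Combining $c(\lambda_{1,\mathbb R})=\lambda_1$ with $r\circ c=\times 2$ then yields $r(\alpha(\Delta_5^+))=52-2\lambda_{1,\mathbb R}$, so $[\tau]=53-2\lambda_{1,\mathbb R}$ in $KO(M)$, as required. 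The only non-routine step is the identification $\mathfrak m_0=r(\Delta_5^+)$; once the complex-conjugate relationship between the half-spin representations of $\Spin(10)$ is invoked and $c$-injectivity applied, the rest is bookkeeping with the $\alpha$-construction and the restriction formulas already proven.
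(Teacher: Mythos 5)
Your proof is correct and follows essentially the same route as the paper: both identify the isotropy representation as $1+r(\Delta_5^+)$ in $RO(\Spin(10))$ via the complex-conjugacy of $\Delta_5^{\pm}$ and injectivity of $c$, and both then eliminate $r(\Delta_5^+)$ using $\rho(\alpha)=1+\lambda_1+\Delta_5^+$ together with the triviality of bundles associated to restrictions of $\E_6$-representations. The only (immaterial) difference is that you apply realification after passing to $K(M)$, whereas the paper realifies at the level of $RO(\Spin(10))$ first.
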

\begin{proof} 
For any homogeneous space $M=G/H$ one has the isomorphism 
$\tau$ is isomorphic to the vector bundle associated to the representation $\alpha_0(Lie(G)/Lie(H))$ of $H$. 
Therefore, we have (see \S\ref{candr}) 
\begin{equation}\label{tangent0}
[\tau]=\alpha_0[Lie(E_6)]-\alpha_0[Lie(\Spin(10)]
=78-\alpha_0[Lie(\Spin(10))]=78-\lambda_{2,\mathbb R}.
\end{equation}

    By Proposition \ref{restrictionformula} (iii),  $Lie(\E_6)\otimes \bb C=V_{\varpi_5}=\gamma\in R\E_6$ and restricts to $1+\lambda_2+\Delta_5^++\Delta_5^-$ as a representation of $\Spin(10)$. 
    Now $1+\lambda_2+\Delta^+_5+\Delta^-_5= c(1+\lambda_{2,\mathbb R}+r(\Delta^+_5))$ since $\Delta^-_5$ is the complex conjugate of $\Delta^+_5$.  So $c([Lie(\E_6)-Lie (\Spin (10))])=c(1+r(\Delta^+_5))$ in $R\Spin(10).$   Since $c:RO(\Spin(10))\to R\Spin(10)$ is a monomorphism, we have $[Lie(\E_6)-Lie(\Spin(10))]=1+r(\Delta^+_5)\in RO(\Spin(10)).$   Set $[V]:=r(\Delta^+_5)\in RO(\Spin(10))$.
    Then 
    \begin{equation}\label{tangent}
[\tau]=[1+V] \in KO(\E_6/\Spin(10)).
\end{equation}

Recall that $\rho(V_{\varpi_1})=1+\lambda_1+\Delta^+_5$ 
in $R\Spin(10)$ by Proposition \ref{restrictionformula}(i), and so $r(V_{\varpi_1})=2+2\lambda_{1,\mathbb R}+V$.   
Since $r(\alpha^\p)$ is restriction of the representation $r(\alpha)$ of $\E_6$ to $\Spin(10)$, 
it follows that the class of the real vector bundle associated 
to $2+2\lambda_1+V$ in $KO(\E_6/\Spin(10))$ is trivial of rank $54$, that is,  
$[2+2\lambda_{1,\mathbb R}+V]=[54]\in KO(\E_6/\Spin(10))$.
Substituting for $[V]$ in the expression for 
$[\tau]$ in Equation (\ref{tangent}), we obtain that 
$[\tau]+2[\lambda_{1,\mathbb R}]=53$. 
\end{proof}

We are ready to prove Theorem \ref{main2}.
\begin{proof} 
By Proposition \ref{tangent1}, we have 
$[\tau]+[2\lambda_{1,\mathbb R}]=53.$
Since $\E_6/\Spin(10)$ has dimension $33$, we obtain an 
isomorphism of {\em vector bundles}
$\tau\oplus \nu\cong 53\epsilon$
where $\nu$ is associated to $2\lambda_{1,\mathbb R}.$  (Cf. \cite[Chapter 9, Theorem 1.5]{Hu}.)
It follows 
from Hirsch's immersion theorem \cite{hirsch}, \cite[Chapter 18, Remark 2.7]{Hu},  
that $\E_6/\Spin(10)$ can be immersed in $\mathbb R^{53}.$

Singhof and Wemmer (\cite[Theorem 2]{SW}) have shown that $M=\E_6/\Spin(10)$ is not stably parallelizable. In 
fact, they show ({\em ibid}, \S3) that the rational Pontjagin class $p_2(M)$ does not vanish.  Also Singhof  (\cite[ Cor. 3.3(a), p.105]{S}) has shown that $p_1(M)=0$  It follows that 
$p_2(\nu)=-p_2(\tau M)\ne 0$.   This yields a lower bound for the immersion dimension of $M$, namely $M$ that cannot be immersed in $\mathbb R^{40}.$   
\end{proof}

\bibliographystyle{abbrv}
\bibliography{references}

\end{document}